\newcommand{\R}{\mathbb{R}}
\newcommand{\Fc}{\mathcal{F}}
\newcommand{\Pc}{\mathcal{P}}
\newcommand{\Qc}{\mathcal{Q}}
\newcommand{\Tc}{\mathcal{T}}
\newcommand{\pmat}[1]{\begin{bmatrix}#1\end{bmatrix}}
\newcommand{\norm}[1]{\left\| #1 \right\|}
\newcommand{\abs}[1]{\left| #1 \right|}
\newcommand{\ip}[1]{\left< #1 \right>}
\newcommand{\avg}[1]{\left\{ #1 \right\}}
\newcommand{\jump}[1]{\ensuremath{[\![#1]\!]} }
\newcommand{\jumpF}[1]{\left[ #1 \right]}
\newcommand{\normgam}[1]{\norm{#1}_{L^2(R)}}
\newcommand{\eps}{{{\bm \varepsilon}}}
\newcommand{\partials}[2]{\frac{\partial #1}{\partial #2}}
\newcommand{\bnhat}{{\bf n}}
\newcommand{\bj}{{j}}
\newcommand{\bX}{{\bf X}}
\newcommand{\bY}{{\bf Y}}
\newcommand{\bx}{{\bf x}}
\newcommand{\bu}{\bm{u}}
\newcommand{\bF}{{\bf F}}
\newcommand{\bv}{ \bm{v}}
\newcommand{\bV}{\bm{V}}
\newcommand{\bT}{{\bf T}}
\newcommand{\bjj}{{j}}
\newcommand{\hs}{{\tilde h}}
\newcounter{problem}
\title{An unconditionally stable semi-implicit CutFEM
	for an interaction problem between
	an elastic membrane and an incompressible fluid\thanks{%
Received... Accepted... Published online on... Recommended by....
This research was supported by the National Science Foundation Division of Mathematical Sciences Grant No.\ 1522663 Higher-Order Methods for Interface Problems with Non-Aligned Meshes.
}}
\author{Kyle Dunn\footnotemark[2] \footnotemark[3] \and Roger Lui \footnotemark[3]
        \and Marcus Sarkis\footnotemark[5]}
\shorttitle{AN UNCONDITIONALLY STABLE SEMI-IMPLICIT CUTFEM} 
\begin{document}

\maketitle

\renewcommand{\thefootnote}{\fnsymbol{footnote}}

\footnotetext[2]{Now at Cold Regions Research and Engineering Laboratory - U.S. Army ERDC, Hanover, NH 03755, {\tt (Kyle.G.Dunn@usace.army.mil)}}
\footnotetext[3]{Mathematical Sciences Department, Worcester Polytechnic Institute,
	Worcester, MA 01609, ({\tt \{rlui, msarkis\}@wpi.edu)}}

\begin{abstract}
In this paper we introduce a finite element method for the Stokes equations with a massless immersed membrane.
This membrane applies normal and tangential forces affecting the velocity  and pressure of the fluid.
Additionally, the points representing this membrane move with the local fluid velocity.
We design and implement a high-accuracy cut finite element method (CutFEM) which enables the use of a structured mesh that is not aligned with the immersed membrane and then
we formulate a time discretization that yields an unconditionally energy stable scheme.
We prove that the stability is not restricted by the parameter choices that constrained previous finite element immersed boundary methods and illustrate the theoretical results with numerical simulations.
\end{abstract}

\begin{keywords}
immersed boundary method, finite element method, numerical stability, CutFEM, unfitted methods
\end{keywords}

\begin{AMS}
65N12, 65N30, 74F10
\end{AMS}

\section{Introduction}

Fluid dynamics problems with immersed boundaries have arisen in many real world scenarios such as cardiac blood flow \cite{Peskin77, Peskin89} and cell mechanics \cite{Agresar98}.
Two prevalent ideas are the \emph{immersed boundary method} introduced by Peskin \cite{Peskin02} and the \emph{immersed interface method} by LeVeque and Li \cite{Leveque94,Leveque97}.
These are both finite difference methods developed for very involved problems.
We note that the immersed interface method was also extended to
the finite element method by imposing flux conservation and
continuity of the solution strongly at certain points of $\Gamma$;
see \cite{Adjerid14,BurmanGuzmanSarkis16,Gong07,Gong10,GuzmanSarkis16,GuzmanSarkis17,He11,He12,Li03,Lin15}.
In the immersed boundary method, the interface applies a local force when computing the fluid velocity and pressure globally at each time step.
The right-hand side function is defined only on the interface and contains a Dirac delta function whose main purpose is to pass information between Eulerian and Lagrangian coordinates.
Peskin's use of a finite difference method requires smoothing of the effects of the force applied by the membrane. Boffi and Gastaldi extended these ideas to the finite element method in \cite{Boffi03}.  In their work, a variational formulation in weak form
is introduced and the action of the forcing function, due to bending and stretching, is now written as an integral over the immersed membrane.
One can also show that when the problem is written in the strong form,
the force applied by the membrane to the fluid is equal to the  jump in the normal stress \cite{Lai01} of the fluid
across the membrane. The conditional energy stability of the method proposed in \cite{Boffi03} was proved later in \cite{Boffi06}.

The framework of our finite element method begins with Nitsche's formulation \cite{Nitsche71} in order to weakly impose Dirichlet boundary conditions on fitted meshes. In
\cite{Hansbo05}, the  Nitsche's formulation was extended to the case
where the domain boundary does not align with the underlying finite element
mesh. In our work, we employ one particular fictitious domain finite element method known as CutFEM \cite{Burman10Ghost,Burman14,Burman10,Burman12} which allows us to divide the global domain into two non-overlapping subdomains. 
This technique not only separates the stress on each side, but also allows us to weakly impose a condition on the jump of the normal derivative in lieu of the prescribed forcing function in the earlier work.
Our numerical experiments show that optimal spatial convergence can be obtained using CutFEM when the interface is described by a static, smooth parameterization.

CutFEM was implemented for Stokes with an immersed boundary and a P1-iso-P2 element in \cite{Hansbo14} where Hansbo et al.\ use a known a priori level set method to track the interface.
In that article it is noted that the optimality of their approach is independent of the interface representation, which moves with a prescribed velocity.
Some additional work has been done on problems with a known interface velocity \cite{Hansbo16}.
Our approach focuses on the movement of the interface not known a priori, that is, we let the interface move with the velocity of the fluid which is not known prior to solving the system at each time step.
Here, we implement a  Q2-P1 time-depedent Stokes element
	\cite{MR3544656, MR3807820, MR1950614, MR2092744, MR2421892} and track the immersed boundary by updating the position of a fixed number of points sampled from the initial curve.
We show that our method using techniques similar to those presented in  \cite{MR1826211,  MR2799400, Newren07, Tu92} on the finite difference immersed boundary method is energy stable. We note that the method presented in this paper
can be extended to two-phase flows.
Work on static interface problems with unfitted meshes for two-phase flows has been done by many groups, see e.g. \cite{Becker09, Burman14,  Chessa03}.

This paper is organized as follows.
Section \ref{Sec: Model formulation} will derive the model and introduce the strong formulation of the spatially continuous problem.
Then in Section \ref{Sec: Discrete time approximation} we look into the time discretization of the problem and prove stability results, building to the fully discretized problem.
In Section \ref{Sec: Discrete-time finite element formulation} we introduce the necessary notation and spaces of functions en route to defining our finite element methods.
We proceed to prove energy stability of the proposed finite element problem, which is unconditional for our semi-implicit method and yields a CFL-like condition for the explicit method.
The results of some numerical tests are shown and discussed in Section \ref{Sec: Numerical results} and we draw conclusions in Section \ref{Sec: Conclusions}.

\section{Model formulation}
\label{Sec: Model formulation}

Consider a domain $\Omega$ in $\R^d$, $d=2,3$, that can be any Lipschitz domain.
For simplicity, we will define $\Omega := (0,1)^2$.
The following equations model an incompressible elastic material inside $\Omega$ using Stokes equations.
The stress tensor is defined by $\bT := -\mu\eps(\bu) + {\bf I} p$ where $\eps(\bu) = \frac{1}{2}( \nabla \bu + (\nabla \bu)^T)$ and $p$ is the pressure. 
To reduce notational clutter, define $\mu$ to be twice the traditional dynamic viscosity.
Inside $\Omega$ there will be a closed curve $\Gamma$ representing a massless, elastic interface between two non-overlapping subdomains $\Omega_1$ and $\Omega_2$.
Throughout this paper, we let $\Omega_1$ denote the region exterior to the curve $\Gamma$ such that $\partial \Omega_1 = \partial\Omega \cup \Gamma$ and $\Omega_2$ denote the interior region encapsulated by $\Gamma$.
 Throughout this work   
$\mu$ is assumed to be constant. We note that
our results, with minor modifications,  hold also for the case where
$\mu$ jumps across $\Gamma$ and varies mildly inside each $\Omega_i$.

The description of the interface $\Gamma$ and the model
for the jump of the stress across $\Gamma$ are based
on the immersed boundary method; see Peskin \cite{Peskin02} and Boffi et al.\ 
\cite{Boffi06}. As we will see in Remark \ref{parametrization}, it is
advantageous to describe $\Gamma$, and therefore the jump of the stress, at time $t$ in parametric form  $\Gamma(s,t)$ for $s\in [0,L]$ and fixed $L$
independent of time.
In general, $s$ is \emph{not} an arc-length
parameterizaton of $\Gamma$ at any time $t$. We use $\bX(s,t)$ to
denote the Cartesian coordinates of $\Gamma(s,t)$ corresponding to a point
$s$ for any given time $t$.
Since this is a closed curve, $\bX(0,t) = \bX(L,t)$ for all time.
To construct $\Gamma(s,t)$, we first define  $\Gamma(s,0)$
given by parametrization $s\in [0,L]$. The fact that $\Gamma$ is not necessarily
parameterized by arc-length allows us to define an initial elastic 
membrane
not only with bending, but also with stretching; that is, 
${\abs{\partial \bX/\partial s}}$ is not necessarily equal to one.
For time $t>0$, we let $\bX(s,t)$ be the material point on the elastic
membrane that moves from an initial position $\bX(s,0)$ and also
we assume that the movement of a point $\bX(s,t)$ on the interface
is given by the fluid velocity at that point. Hence,
we impose continuity of velocity
$$\jump{\bu}=0$$
on the interface. For a quantity $\phi$
defined over $\Omega$ we denote  $\phi_1 = \phi|_{\Omega_1}$ and $\phi_2 = \phi |_{\Omega_2}$. Then $\jump{\phi} = (\phi_1 - \phi_2) |_{\Gamma}$ denotes the jump of $\phi$ across $\Gamma$ at a given point. We also impose a
no-slip condition on the interface, that is,
$$\partials{\bX(s,t)}{t} = \bu(\bX(s,t),t).$$

The unit tangent vector, chosen to be in the direction of the parameterization, is defined in terms of $s$ by
$${\bf\tau} = \frac{1}{\abs{\partial \bX/\partial s}} \partials{\bX}{s}.$$
The boundary tension $T(s,t)$ of the elastic membrane is modeled using a generalized Hooke's law, where
$$T(s,t) = \sigma\left(\abs{\partial \bX/\partial s};s,t\right)$$
and the function 
$\sigma$ is defined below.  By computing the elastic force on an arbitrary segment between two points $a$ and $b$, we find that
$$(T{\bf \tau})(b,t) - (T{\bf \tau})(a,t) = \int_a^b \partials{}{s} (T{\bf \tau})(s,t)\, ds.$$
Since this equality holds for any choice of $a$ and $b$, we know the force on $\Gamma$ is defined in terms of $s$ by
\begin{equation}
\bF = \partials{}{s} (T{\bf \tau}).
\label{F}
\end{equation}

A slight modification of the proof of Theorem 1 in \cite{Lai01} shows that for a force $\bF$ defined in terms of $s\in [0,L]$,
	\begin{equation*}
	\begin{array}{l}
	\displaystyle\jump{p\bnhat} = -\frac{(\bF\cdot\bnhat)\bnhat}{\abs{\partial \bX/\partial s}}\\
	\displaystyle\mu\jump{\eps(\bu) \bnhat} =  \frac{\bF - (\bF\cdot\bnhat)\bnhat}{\abs{\partial \bX/\partial s}}.
	\end{array}
	\end{equation*}
It follows from \eqref{F} that if we choose $\sigma(\abs{\partial \bX/\partial s};s,t)$ to be proportional to $\abs{\partial \bX/\partial s}$, i.e., $\sigma(\abs{\partial \bX/\partial s};s,t) = \kappa \abs{\partial \bX/\partial s}$, then the jump condition is defined by
	\begin{equation}
	\jump{(\mu\eps(\bu)-p)\bnhat} = \frac{\bF(s,t)}{\abs{\partials{\bX}{s}(s,t)}} =  \frac{\kappa}{\abs{\partials{\bX}{s}(s,t)}} \partials{^2\bX}{s^2}(s,t).
	\label{stress jump}
	\end{equation}
Physically, \eqref{stress jump} means that the elastic interface will apply a force as it is stretched or bent at a given point.
Here, the jump condition is defined in terms of the respective quantities restricted to $\Gamma(t)$.
For example,
\begin{align*}
\jump{(\mu\eps(\bu)-p)\bnhat} &= \big((\mu\eps(\bu_1(\bX(s,t),t))-p_1(\bX(s,t),t))\\
& \qquad \quad - (\mu\eps(\bu_2(\bX(s,t),t))-p_2(\bX(s,t),t))\big)\bnhat(\bX(s,t),t)
\end{align*}
for any $s\in [0,L]$.
To ease the notation, we denote $\bnhat = \bnhat_1$, i.e., the unit normal pointing outward from the exterior $\Omega_1$.

We further impose a homogeneous Dirichlet condition on $\partial\Omega$.
Combining Stokes equations with the continuity of velocity and \eqref{stress jump}, the strong form of the equations to be solved is given in Problem \ref{Ch2: Strong form}.

\begin{algorithm}
	\refstepcounter{problem}
	{\bf Problem \theproblem: Strong formulation}\\
	
	Find $\bX(s,t)$, $\bu_i(\bx,t)$, and $p_i(\bx,t)$ for $i=1,2$ such that for all $t\in (0,T)$
	\begin{subequations}
		\begin{alignat}{2}
		\partials{\bu_i}{t}-\mu\nabla\cdot\eps(\bu_i) + \nabla p_i &= 0 \qquad &&\text{ in } \Omega_i(t), \, i = 1, 2 ,\label{ContTime: elasticity}\\
		\nabla\cdot \bu_i &= 0 \qquad &&\text{ in } \Omega_i(t), \, i = 1,2, \label{ContTime: incompressibility}\\
		\jump{(\mu\eps(\bu)- p)\bnhat} &=  \frac{\kappa}{\abs{\partials{\bX}{s}}} \partials{^2\bX(s,t)}{s^2} \qquad &&\text{ for } s\in [0,L],\label{ContTime: stress jump}\\
		\jump{\bu} &= 0 \qquad && \text{ for } s\in[0,L], \label{ContTime: velocity jump}\\
		\bu_1 &= 0 \qquad &&\text{ on } \partial \Omega,\label{ContTime: Dir BC} \\
		\partials{\bX(s,t)}{t} &=   \bu(\bX(s,t),t)   \qquad && \text{ for } s\in[0,L]. \label{ContTime: movement condition}
		\end{alignat}
	\end{subequations}
	\label{Ch2: Strong form}
\end{algorithm}

\begin{remark}
	Note the time dependence of each subdomain and the location of the interface.
	When deriving the weak formulation, our spaces of test functions depend on time as well.
\end{remark}

\begin{remark} \label{parametrization} 
	In Nitsche's formulation of the interface problem, we must substitute \eqref{ContTime: stress jump} into an integral over $\Gamma(t)$.	
	We note that in its original form, we are integrating with respect to a time-dependent arc length parameterization of the interface.
	Since \eqref{ContTime: stress jump} is defined in terms of $s$, we will transform this integral over $\Gamma(t)$ to an integral over $[0,L]$.
	We have the following equalities:
	\begin{align*}
	\big(\jump{(\mu\eps(\bu)- p)\bnhat},\{\bv\}\big)_{\Gamma(t)} &= \int_{\Omega}\jump{(\mu\eps(\bu)- p)\bnhat}\cdot \left\{\bv\left( \bx \right)\right\}\delta(\bx - \bX(s,t)) \, d\bx\\
	&= \int_{0}^L \jump{(\mu\eps(\bu)- p)\bnhat}\cdot \left\{\bv\left( \bX(s,t) \right)\right\} \abs{\partials{\bX}{s}}\, ds\\
	&= -\int_{0}^L \kappa \partials{\bX}{s}(s,t)\cdot \partials{}{s}\left\{\bv\left( \bX(s,t) \right)\right\} \, ds,
	\end{align*}
	where we have denoted the average of a function $\phi$  by $\avg{\phi} = \frac{1}{2}(\phi|_{\Omega_1} + \phi|_{\Omega_2})$.
	
	The weak formulation can be obtained by the usual integration by parts on \eqref{ContTime: elasticity}-\eqref{ContTime: incompressibility} after multiplication by a test function.
	To symmetrize the problem for increased accuracy and computational efficiency, we add consistent terms to the weak formulation, seen in \cite{Burman09,Hansbo05,Hansbo14} .
	A nonsymmetric interior penalty method may also be used, see e.g. \cite{Burman09,Hansbo14}.
\end{remark}


\section{Discrete-Time Approximation}
\label{Sec: Discrete time approximation}

Given $\Delta t$, we consider equally-spaced time steps $t_n = n\Delta t$, for $0\leq n \leq N_t$.
For each $n$, let $\Gamma^n = \Gamma(t_n)$ be the interface separating the two subdomains $\Omega_i^n = \Omega_i(t_n)$.
We also let $\bu^n = \bu(\bx,t_n)$ and $p^n = p(x,t_n)$ to simplify notation.
Below in the temporally discrete variations of \eqref{ContTime: elasticity}-\eqref{ContTime: movement condition}, we use a backward-difference approximation for $\partial_t \bu_i$.  
In other words, the derivative with respect to time at $t_{n+1}$ is approximated by $\partial_t \bu_i^{n+1} = (\bu_i^{n+1}-\bu_i^n)/\Delta t$.
 Because $\bu_i^n$ is defined on $\Omega_i^{n-1}$ but must be integrated over $\Omega_{i}^n$, we define
$$\tilde \bu^n := \left\{\begin{array}{ll}
\bu_1^n(\bx),& \quad \text{ for } \bx\in \Omega_1^{n-1},\\
bu_2^n(\bx),& \quad \text{ for } \bx\in \Omega_2^{n-1}.
\end{array}
\right.$$

Recall that each integral over $\Gamma^n$ will be expressed in terms of $s$.
We also write $\bX^n(s)= \bX(s,t_n)$ to simplify the notation.
In the spatially continuous case, we simplify the inner products involving the jump condition on the interface as follows:
\begin{itemize}

	\item Explicit method:
	\begin{equation}
	\big(\jump{(\mu\eps(\bu)-p)\bnhat},\{\bv\}\big)_{\Gamma^{n}} = -\kappa\int_0^L \partials{\bX^{n}(s)}{s} \partials{}{s}\avg{\bv(\bX^{n}(s))}\, ds.
	\label{Explicit Integral}
	\end{equation}
	
	\item Semi-implicit method:
	\begin{align}
	\big(&\jump{(\mu\eps(\bu)-p)\bnhat},\{\bv\}\big)_{\Gamma^{n}}
	= -\kappa\int_0^L \partials{\bX^{n+1}}{s} \partials{}{s}\avg{\bv(\bX^{n}(s))}\, ds\nonumber\\
	&\,\quad = -\kappa\int_0^L \left(\partials{\bX^n}{s} + \Delta t \partials{}{s}\avg{\bu^{n+1}(\bX^n(s))}\right) \partials{}{s}\avg{\bv(\bX^{n}(s))}\, ds.
	\label{Semi-implicit integral}
	\end{align}
\end{itemize}
See that the difference between \eqref{Explicit Integral} and \eqref{Semi-implicit integral} is the extrapolation used in the semi-implicit method, where we solve for $\bX^{n+1}$ in \eqref{ContTime: movement condition}.
Note that $\bu^{n+1}(\bX^n) = \avg{\bu^{n+1}(\bX^n)}$ in the spatially continuous problem and the average is included for comparison to the discrete case.
The expression for the forcing function \eqref{Semi-implicit integral} incorporates the unknown velocity of the interface at the current time step.

We formulate the continuous-space, discrete-time Problem \ref{prob: continuous weak form} letting $\bY = \bX^{n+1}$ for the semi-implicit method and $\bY = \bX^n$ for the explicit method. 
For completeness, we note that the implicit method sets $\bY = \bX^{n+1}$ and integrates over $\Omega^{n+1}$ and $\Gamma^{n+1}$ in Problem \ref{prob: continuous weak form} instead of $\Omega^{n}$ and $\Gamma^{n}$, respectively.
Additional challenges arise with the implicit method because $\Gamma^{n+1}$ and $\Omega^{n+1}$ are not known prior to integration, so we omit this discretization.
We included the terms involving $\jump{\bu^{n+1}}$ in Problem \ref{prob: continuous weak form}
to compare to the one further discretized in space, Problem \ref{Problem: FEM}, although $\jump{\bu^{n+1}} = 0$ in the current continuous setting.
For the same reason, we include $\avg{\partials{}{s}\bu^{n+1}(\bX^n(s))} =
\partials{}{s}\bu^{n+1}(\bX^n(s))$. 

\begin{algorithm}[h]
	
	\refstepcounter{problem}
	{\bf Problem \theproblem: Discrete-time weak formulation}\\
	\noindent Given $(\bu_1^0,\bu_2^0)\in [H^1(\Omega_1^0)]^2 \times [H^1(\Omega_2^0)]^2$ where $\bu_1^0|_{\partial\Omega} = 0$, $(p_1^0,p_2^0) \in (L^2(\Omega_1^0)\times L^2(\Omega_2^0))/\R$, and $\bX^0: [0,L]\to \Omega$, find for all $1
	\leq n \leq N_t-1$ solutions $(\bu_1^{n+1},\bu_2^{n+1})\in [H^1(\Omega_1^{n})]^2 \times [H^1(\Omega_2^{n})]^2$, $(p_1^{n+1},p_2^{n+1}) \in \big(L^2(\Omega_1^{n})\times L^2(\Omega_2^{n})\big)/\mathbb{R}$, and $\bX^{n+1}: [0,L]\to \Omega$ such that $\bu_1^{n+1} = 0$ on $\partial\Omega$, $\jump{\bu^{n+1}} = 0$ on $\Gamma^n$, and 
	\begin{subequations}
		\begin{alignat}{1}
		\begin{split}  \label{Dynamic: discrete time formulation}
		\sum_{i=1}^2 \frac{1}{\Delta t}(&\bu_i^{n+1}-\tilde\bu^n,\bv_i)_{\Omega_i^{n}} + \mu(\eps(\bu_i^{n+1}),\eps(\bv_i))_{\Omega_{i}^{n}} - (p_i^{n+1}, \nabla \cdot \bv_i)_{\Omega_{i}^{n}}
		\\
		&  - \big(\{(\mu\eps(\bu^{n+1})-p^{n+1})\bnhat\},\jump{\bv}\big)_{\Gamma^{n}} - \big(\jump{\bu^{n+1}},\{\mu\eps(\bv)\bnhat\}\big)_{\Gamma^{n}}\\ 
		&-	\big((\mu\eps(\bu_1^{n+1})-p_1^{n+1})\bnhat,\bv_1\big)_{\partial\Omega} -\big(\bu_1^{n+1},\mu\eps(\bv_1)\bnhat\big)_{\partial\Omega}
		\\
		&= \int_{0}^L \kappa \partials{^2\bY}{s^2}(s) \left\{\bv\left( \bX^{n}(s) \right)\right\} \, ds  \end{split}
		\\
		\sum_{i=1}^2 -(\nabla& \cdot \bu_i^{n+1}, q_i)_{\Omega_{i}^{n}} + \big(\jump{\bu^{n+1}},\{q\bnhat\}\big)_{\Gamma^n} + (\bu_1^{n+1},q_1\bnhat)_{\partial\Omega} = 0
		\label{Dynamic: discrete time div equation}
		\end{alignat}
		for all $(\bv_1,\bv_2)\in [H^1(\Omega_1^n)]^2 \times [H^1(\Omega_2^n)]^2$ and $(q_1,q_2) \in L^2(\Omega_1^n)\times L^2(\Omega_2^n)$, and
		\begin{alignat}{2}
		\frac{\bX^{n+1}(s) - \bX^n(s)}{\Delta t} &= \{\bu^{n+1}(\bX^{n}(s))\} &&\qquad \text{ for } s \in [0,L].
		\label{Dynamic: discrete time boundary movement}
		\end{alignat}
	\end{subequations}
	\label{prob: continuous weak form}
\end{algorithm}

\subsection{Energy Estimates}

The proposed semi-implicit method combines the analytical simplicity and stability of the implicit method in \cite{Boffi06} with the computational convenience of the explicit method.
For a quantity $\phi(s)$ defined on $\Gamma$, we define the norm over the reference configuration $R = [0,L]$ to be

\begin{equation}
\label{R norm}
\normgam{\phi}^2 := \int_{0}^{L} \big(\phi(s)\big)^2 \, ds.
\end{equation}
	If we define total energy to be the sum of the kinetic and elastic energies 
	\begin{equation}
	\label{energy definition}
	E^n := \frac{1}{2}\norm{\bu^{n}}^2_{L^2(\Omega)} +
	\frac{1}{2}\kappa\normgam{\partials{\bX^{n}}{s}}^2,
	\end{equation}
then the following lemma shows that the energy of the system computed using $\bY = \bX^{n+1}$ is monotonically decreasing.

\begin{lemma}\label{Dynamic: semi-implicit theorem}
	Let $\bu^{n+1}$, $p^{n+1}$, and $\bX^{n+1}$ be solutions to \eqref{Dynamic: discrete time formulation}-\eqref{Dynamic: discrete time boundary movement} at time $t^{n+1}$ with $\bY = \bX^{n+1}$.  Then the following equality holds:
		\begin{align}
		\begin{split} \label{Dynamic: semi-implicit inequality}
		E^{n+1} &= E^n - \frac{1}{2}\norm{\bu^{n+1} - \bu^n}^2_{L^2(\Omega)} - \Delta t \sum_{i=1}^2 \mu \norm{\eps(\bu_i^{n+1})}^2_{L^2(\Omega)}\\
		&\quad  - \frac{1}{2}\kappa \normgam{\partials{\bX^{n+1}}{s} - \partials{\bX^{n}}{s}}^2.
		\end{split}
		\end{align}
\end{lemma}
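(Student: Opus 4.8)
The plan is the standard discrete energy argument: test the discrete equations against the discrete solution itself and read off the balance term by term. First I would take $\bv_i = \bu_i^{n+1}$ in \eqref{Dynamic: discrete time formulation} and $q_i = p_i^{n+1}$ in \eqref{Dynamic: discrete time div equation} (both are admissible test functions). The key observation is that the collection of all pressure-dependent terms produced in \eqref{Dynamic: discrete time formulation} by this choice --- the bulk terms $-\sum_i(p_i^{n+1},\nabla\cdot\bu_i^{n+1})_{\Omega_i^n}$, the $\{p^{n+1}\bnhat\}$ contribution of the interface coupling, and the $p_1^{n+1}\bnhat$ contribution of the Dirichlet coupling on $\partial\Omega$ --- is, by symmetry of the inner products, exactly the left-hand side of \eqref{Dynamic: discrete time div equation} evaluated at $q_i=p_i^{n+1}$, hence equals zero. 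Next, the constraint $\jump{\bu^{n+1}}=0$ on $\Gamma^n$ annihilates the two remaining interface terms $\big(\{\mu\eps(\bu^{n+1})\bnhat\},\jump{\bu^{n+1}}\big)_{\Gamma^n}$ and $\big(\jump{\bu^{n+1}},\{\mu\eps(\bu^{n+1})\bnhat\}\big)_{\Gamma^n}$, and the homogeneous Dirichlet condition $\bu_1^{n+1}=0$ on $\partial\Omega$ annihilates the two remaining boundary terms. What survives, using $\bY=\bX^{n+1}$ and $\bv=\bu^{n+1}$ on the right (with $\avg{\bu^{n+1}(\bX^n(s))}=\bu^{n+1}(\bX^n(s))$ since $\jump{\bu^{n+1}}=0$), is
\[
\sum_{i=1}^2 \frac{1}{\Delta t}(\bu_i^{n+1}-\tilde\bu^n,\bu_i^{n+1})_{\Omega_i^n} + \mu\,\norm{\eps(\bu_i^{n+1})}_{L^2(\Omega_i^n)}^2 \;=\; \kappa\int_0^L \partials{^2\bX^{n+1}}{s^2}(s)\cdot\avg{\bu^{n+1}(\bX^n(s))}\,ds.
\]

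For the time-difference term I would apply the polarization identity $2(a-b,a)=|a|^2-|b|^2+|a-b|^2$ on each $\Omega_i^n$; since the pairs $\Omega_1^{n-1},\Omega_2^{n-1}$ and $\Omega_1^n,\Omega_2^n$ both partition $\Omega$, the relabeling used in the definition of $\tilde\bu^n$ leaves the global norms unchanged, so $\sum_i\norm{\bu_i^{n+1}}_{L^2(\Omega_i^n)}^2=\norm{\bu^{n+1}}_{L^2(\Omega)}^2$ and $\sum_i\norm{\tilde\bu^n}_{L^2(\Omega_i^n)}^2=\norm{\bu^n}_{L^2(\Omega)}^2$. This converts that term into $\tfrac{1}{2\Delta t}\big(\norm{\bu^{n+1}}_{L^2(\Omega)}^2-\norm{\bu^n}_{L^2(\Omega)}^2+\norm{\bu^{n+1}-\bu^n}_{L^2(\Omega)}^2\big)$.

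For the interface integral I would integrate by parts in $s$ over $[0,L]$ --- the boundary contributions vanish because $\Gamma$ is a closed curve, so both $\bX^{n+1}$ and $s\mapsto\bu^{n+1}(\bX^n(s))$ are $L$-periodic --- obtaining $-\kappa\int_0^L \partial_s\bX^{n+1}\cdot\partial_s\avg{\bu^{n+1}(\bX^n(s))}\,ds$, and then differentiate the movement relation \eqref{Dynamic: discrete time boundary movement} in $s$ to substitute $\partial_s\avg{\bu^{n+1}(\bX^n(s))}=\tfrac{1}{\Delta t}\big(\partial_s\bX^{n+1}-\partial_s\bX^n\big)$. Applying the same algebraic identity to the pointwise product $\partial_s\bX^{n+1}\cdot(\partial_s\bX^{n+1}-\partial_s\bX^n)$ rewrites the right-hand side as $-\tfrac{\kappa}{2\Delta t}\big(\normgam{\partials{\bX^{n+1}}{s}}^2-\normgam{\partials{\bX^{n}}{s}}^2+\normgam{\partials{\bX^{n+1}}{s}-\partials{\bX^{n}}{s}}^2\big)$. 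Multiplying the resulting identity through by $\Delta t$ and grouping the $E^{n+1}$ and $E^n$ pieces produces \eqref{Dynamic: semi-implicit inequality} exactly.

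I expect the only point requiring genuine care to be the bookkeeping around the moving subdomains: $\bu^n$ lives on $\Omega_i^{n-1}$ but is integrated over $\Omega_i^n$, which is precisely why $\tilde\bu^n$ was introduced, and one must verify that this change of labeling does not alter the global $L^2(\Omega)$ norms entering $E^n$. The remaining ingredients --- the cancellation of the pressure terms against \eqref{Dynamic: discrete time div equation}, the vanishing of the Nitsche and Dirichlet boundary terms, the two applications of the polarization identity, and the single integration by parts in $s$ that uses $\bY=\bX^{n+1}$ together with \eqref{Dynamic: discrete time boundary movement} --- are routine.
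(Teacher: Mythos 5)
Your proposal is correct and follows essentially the same route as the paper's proof: test with $\bv=\bu^{n+1}$, $q=p^{n+1}$, cancel the pressure, interface, and boundary terms via symmetry, $\jump{\bu^{n+1}}=0$, and the Dirichlet condition, substitute the discrete movement relation $\partial_s\bX^{n+1}-\partial_s\bX^{n}=\Delta t\,\partial_s\{\bu^{n+1}(\bX^n(s))\}$ into the elastic term, and apply the polarization identity twice. The only cosmetic difference is that the paper absorbs the integration by parts in $s$ into the statement of the weak form (Remark \ref{parametrization}), whereas you perform it explicitly using periodicity; your extra care with the $\tilde\bu^n$ relabeling is consistent with what the paper does implicitly.
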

\begin{proof}
	Begin by letting $\bv = \bu^{n+1}$ and $q = p^{n+1}$ in \eqref{Dynamic: discrete time formulation}-\eqref{Dynamic: discrete time div equation} and subtract \eqref{Dynamic: discrete time div equation} from \eqref{Dynamic: discrete time formulation},
	where $\bu^{n+1}$ stands for $\bu^{n+1} = \bu^{n+1}_i$ on $\Omega_i^n$ for $i=1,2$. We note that for each time step we have  $\jump{\bu^{n+1}} = 0$ on $\Gamma^n$ and $\bu_1^{n+1}=0$ on $\partial\Omega$.
	Thus, these boundary terms disappear in \eqref{Dynamic: discrete time formulation}-\eqref{Dynamic: discrete time div equation}.
	Using the symmetry of the bilinear form we are able to simplify the difference of \eqref{Dynamic: discrete time formulation} and \eqref{Dynamic: discrete time div equation} to
	\begin{alignat}{1}
	\begin{split} \label{Continuous space: v equals u}
	&\frac{1}{\Delta t}(\bu^{n+1}-\bu^n ,\bu^{n+1})_{\Omega} + \mu(\eps(\bu^{n+1}),\eps(\bu^{n+1}))_{\Omega}\\		
	&\qquad +\kappa\int_{0}^L \partials{\bX^{n+1}}{s}(s) \partials{}{s}\left\{\bu^{n+1}\left( \bX^{n}(s) \right)\right\} \, ds = 0.
	\end{split}
	\end{alignat}
	First, we can rewrite
	\begin{align*}
	\sum_{i=1}^2 \mu(\eps(\bu_i^{n+1}),\eps(\bu_i^{n+1}))_{\Omega_{i}^{n}} &= \sum_{i=1}^2 \mu\norm{\eps(\bu_i^{n+1})}^2_{L^2(\Omega_{i}^{n})}\\
	&= \mu \norm{\eps(\bu^{n+1})}^2_{L^2(\Omega)}.
	\end{align*}
	Now simplifying the forcing term in \eqref{Continuous space: v equals u}, we have
	\begin{align*}
	\kappa&\int_{0}^L \partials{\bX^{n+1}}{s}(s) \partials{}{s}\left\{\bu^{n+1}\left( \bX^{n}(s) \right)\right\} \, ds\\
	&= \frac{\kappa}{\Delta t}\int_0^L \partials{\bX^{n+1}}{s}\left(\partials{\bX^{n+1}}{s}-\partials{\bX^{n}}{s}\right)\, ds\\
	&= \frac{\kappa}{2\Delta t}\int_0^L \left(\partials{\bX^{n+1}}{s} + \partials{\bX^{n+1}}{s} - \partials{\bX^{n}}{s} + \partials{\bX^{n}}{s}\right)\left(\partials{\bX^{n+1}}{s}-\partials{\bX^{n}}{s}\right)\, ds\\
	&= \frac{\kappa}{2\Delta t}\int_0^L \left[ \left(\partials{\bX^{n+1}}{s}\right)^2 + \left(\partials{\bX^{n+1}}{s} - \partials{\bX^{n}}{s}\right)^2 - \left(\partials{\bX^{n}}{s}\right)^2 \, \right] ds\\
	&=\frac{\kappa}{2\Delta t}\left( \normgam{\partials{\bX^{n+1}}{s}}^2 +  \normgam{\partials{\bX^{n+1}}{s} - \partials{\bX^{n}}{s}}^2 -  \normgam{\partials{\bX^{n}}{s}}^2\right). 
	\end{align*}
	Using a similar manipulation for the first term on the left-hand side of \eqref{Continuous space: v equals u}, we obtain by a simple calculation that
	\begin{align*}
	(\bu^{n+1}-\bu^n ,\bu^{n+1})_{\Omega} = \frac{1}{2}\left( \norm{\bu^{n+1}}^2_{L^2(\Omega)}+  \norm{\bu^{n+1}-\bu^{n}}^2_{L^2(\Omega)} -  \norm{\bu^{n}}^2_{L^2(\Omega)}\right).
	\end{align*}
	Applying the above simplifications to each term in \eqref{Continuous space: v equals u} and multiplying by $2\Delta t$ we have \eqref{Dynamic: semi-implicit inequality}.
\end{proof}


We now turn to the explicit method, whose solution must satisfy equations \eqref{Dynamic: discrete time formulation}-\eqref{Dynamic: discrete time boundary movement} with $\bY = \bX^n$.
The velocity $\bu^{n+1}$ and pressure $p^{n+1}$ are computed by explicitly using the interface location $\Gamma^n$ and subdomains $\Omega_i^{n}$ determined in the previous time step.
The energy estimate for the explicit method is similar to that of the semi-implicit method, but lacks the stabilizing contribution of the extrapolation used to compute the force of the membrane in \eqref{Semi-implicit integral}.
We have the following lemma.

\begin{lemma} \label{Lemma: explicit}
	Let $\bu^{n+1}$, $p^{n+1}$, and $\bX^{n+1}$ be solutions to \eqref{Dynamic: discrete time formulation}-\eqref{Dynamic: discrete time boundary movement} at time $t^{n+1}$ with $\bY = \bX^n$.  Then the following equality holds:
		\begin{align}
		\begin{split} \label{Dynamic: explicit inequality}
		E^{n+1}& =  E^n  - \frac{1}{2} \norm{\bu^{n+1}-\bu^n}^2_{L^2(\Omega)} - \Delta t \sum_{i=1}^2 \mu \norm{\eps(\bu^{n+1})}^2_{L^2(\Omega)}
		\\
		&\qquad  +  \frac{\kappa}{2}(\Delta t)^2\norm{\nabla_\Gamma\bu^{n+1}}_{L^2(\Gamma^n)}^2.
		\end{split}
		\end{align}
\end{lemma}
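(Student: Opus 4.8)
The plan is to replay the proof of Lemma \ref{Dynamic: semi-implicit theorem} essentially verbatim, tracking one sign that changes because the extrapolated factor $\partials{\bX^{n+1}}{s}$ in the membrane forcing is now replaced by the old factor $\partials{\bX^{n}}{s}$ (since $\bY=\bX^n$). First I would put $\bv=\bu^{n+1}$ and $q=p^{n+1}$ in \eqref{Dynamic: discrete time formulation}--\eqref{Dynamic: discrete time div equation} and subtract the second from the first. As in the semi-implicit case, the Nitsche consistency and symmetry terms on $\partial\Omega$ and on $\Gamma^n$ drop out because $\bu_1^{n+1}=0$ on $\partial\Omega$ and $\jump{\bu^{n+1}}=0$ on $\Gamma^n$, and symmetry of the bilinear form collapses the surviving interface contributions, leaving the explicit analogue of \eqref{Continuous space: v equals u}:
\[
\frac{1}{\Delta t}(\bu^{n+1}-\bu^n,\bu^{n+1})_{\Omega} + \mu\norm{\eps(\bu^{n+1})}_{L^2(\Omega)}^2 + \kappa\int_0^L \partials{\bX^{n}}{s}(s)\,\partials{}{s}\avg{\bu^{n+1}(\bX^{n}(s))}\, ds = 0 .
\]

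Next I would rewrite the two quadratic-looking terms by polarization. For the time term, $(\bu^{n+1}-\bu^n,\bu^{n+1})_{\Omega}=\tfrac12\big(\norm{\bu^{n+1}}_{L^2(\Omega)}^2+\norm{\bu^{n+1}-\bu^n}_{L^2(\Omega)}^2-\norm{\bu^n}_{L^2(\Omega)}^2\big)$, exactly as in the previous lemma. For the membrane term I would first use the boundary movement condition \eqref{Dynamic: discrete time boundary movement} to replace $\partials{}{s}\avg{\bu^{n+1}(\bX^{n}(s))}$ by $\tfrac{1}{\Delta t}\big(\partials{\bX^{n+1}}{s}-\partials{\bX^{n}}{s}\big)$, and then apply the identity $a\cdot(b-a)=\tfrac12\big(b^2-a^2-(b-a)^2\big)$ with $a=\partials{\bX^{n}}{s}$ and $b=\partials{\bX^{n+1}}{s}$. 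This is the one place the computation genuinely differs from Lemma \ref{Dynamic: semi-implicit theorem}: there the evaluation factor is $b$, producing $+(b-a)^2$, whereas here the factor is $a$, producing $-(b-a)^2$. Hence
\[
\kappa\int_0^L \partials{\bX^{n}}{s}\,\partials{}{s}\avg{\bu^{n+1}(\bX^{n}(s))}\, ds = \frac{\kappa}{2\Delta t}\Big( \normgam{\partials{\bX^{n+1}}{s}}^2 - \normgam{\partials{\bX^{n}}{s}}^2 - \normgam{\partials{\bX^{n+1}}{s}-\partials{\bX^{n}}{s}}^2 \Big).
\]

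Finally I would substitute both identities back, multiply through by $2\Delta t$, divide by two, and regroup the $\norm{\bu}^2$ and $\normgam{\partials{\bX}{s}}^2$ pairs into $E^{n+1}$ and $E^n$ via \eqref{energy definition}; using also $\sum_i\norm{\eps(\bu_i^{n+1})}^2_{L^2(\Omega_i^n)}=\norm{\eps(\bu^{n+1})}^2_{L^2(\Omega)}$ as in the semi-implicit proof. The term $-\tfrac{\kappa}{2}\normgam{\partials{\bX^{n+1}}{s}-\partials{\bX^{n}}{s}}^2$ generated above moves to the right-hand side with a $+$ sign, and one last application of \eqref{Dynamic: discrete time boundary movement}, namely $\partials{\bX^{n+1}}{s}-\partials{\bX^{n}}{s}=\Delta t\,\partials{}{s}\avg{\bu^{n+1}(\bX^{n}(s))}$, turns it into $\tfrac{\kappa}{2}(\Delta t)^2\norm{\nabla_\Gamma\bu^{n+1}}_{L^2(\Gamma^n)}^2$, which is \eqref{Dynamic: explicit inequality}. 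I do not anticipate a real obstacle: the argument is a sign-tracked copy of Lemma \ref{Dynamic: semi-implicit theorem}. The only point worth flagging is interpretive rather than technical, namely that the last term in \eqref{Dynamic: explicit inequality} carries a $+$ sign, so unlike \eqref{Dynamic: semi-implicit inequality} this identity does not by itself give monotone energy decay; absorbing that term into the viscous dissipation $\Delta t\sum_{i=1}^2\mu\norm{\eps(\bu_i^{n+1})}^2$ is exactly what will produce the CFL-type restriction in the fully discrete analysis, but at the level of this lemma we only record the equality.
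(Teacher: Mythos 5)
Your proposal is correct and follows the paper's proof essentially step for step: same choice of test functions, same polarization identities for the time term and the membrane term, the same single sign change traced to the factor $\partials{\bX^{n}}{s}$ replacing $\partials{\bX^{n+1}}{s}$, and the same use of \eqref{Dynamic: discrete time boundary movement} to rewrite the resulting positive term as $\frac{\kappa}{2}(\Delta t)^2\norm{\nabla_\Gamma\bu^{n+1}}_{L^2(\Gamma^n)}^2$. Your closing remark about absorbing that term into the viscous dissipation matches the paper's own comment following the lemma on conditional stability via a trace theorem and inverse inequality.
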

\begin{proof}
	We begin with the simplification made in the previous proof:
	\begin{align}
	\begin{split} \label{Dynamic: explicit energy sub}
	\frac{1}{\Delta t}&(\bu^{n+1}-\bu^n ,\bu^{n+1})_{\Omega} + \mu(\eps(\bu^{n+1}),\eps(\bu^{n+1}))_{\Omega}\\
	&+ \kappa\int_{0}^L \partials{\bX^{n}}{s}(s) \partials{}{s}\left\{\bu^{n+1}\left( \bX^{n}(s) \right)\right\} = 0. 
	\end{split}
	\end{align}
	The proof for the explicit case is identical to the proof in the semi-implicit case with one important difference in the treatment of the final term in \eqref{Dynamic: explicit energy sub}.
	We have
	\begin{align*}
	\kappa\int_{0}^L &\partials{\bX^{n}}{s}(s) \partials{}{s}\left\{\bu^{n+1}\left( \bX^{n}(s) \right)\right\}\\
	&= \frac{\kappa}{\Delta t}\int_0^L \partials{\bX^{n}}{s}\left(\partials{\bX^{n+1}}{s}-\partials{\bX^{n}}{s}\right)\, ds\\
	&= \frac{\kappa}{2\Delta t}\int_0^L \left(\partials{\bX^{n+1}}{s} - \partials{\bX^{n+1}}{s} + \partials{\bX^{n}}{s} + \partials{\bX^{n}}{s}\right)\left(\partials{\bX^{n+1}}{s}-\partials{\bX^{n}}{s}\right)\, ds\\
	&= \frac{\kappa}{2\Delta t}\int_0^L \left[ \left(\partials{\bX^{n+1}}{s}\right)^2 - \left(\partials{\bX^{n+1}}{s} - \partials{\bX^{n}}{s}\right)^2 - \left(\partials{\bX^{n}}{s}\right)^2 \, \right] ds\\
	&=\frac{\kappa}{2\Delta t} \left(\normgam{\partials{\bX^{n+1}}{s}}^2 - \normgam{\partials{\bX^{n+1}}{s} - \partials{\bX^{n}}{s}}^2 - \normgam{\partials{\bX^{n}}{s}}^2 \right).
	\end{align*}
	The simplification of the last term in \eqref{Dynamic: explicit energy sub} shown above is almost identical to Lemma \ref{Dynamic: semi-implicit theorem}, but the important difference is that the middle term in the final line above is negative.
	Now the energy may not be decreasing.
	
	To make more sense of the norm involving both $\bX^{n+1}$ and $\bX^n$, we can write it in terms of the surface gradient of the velocity on $\Gamma$ as follows:
	\begin{align*}
	\frac{\kappa}{2\Delta t} \normgam{\partials{\bX^{n+1}}{s} - \partials{\bX^{n}}{s}}^2 &= \frac{\kappa}{2\Delta t} \normgam{\Delta t \partials{}{s}\bu^{n+1}(\bX^n)}^2\\
	&=\frac{\kappa\Delta t}{2}\norm{\nabla_\Gamma\bu^{n+1}}_{L^2(\Gamma^n)}^2.
	\end{align*}
	We substitute the final expression into \eqref{Dynamic: explicit energy sub} along with the simplification of the time-derivative term in the proof of Lemma \ref{Dynamic: semi-implicit theorem} to get \eqref{Dynamic: explicit inequality}.
\end{proof}

We note that for the discrete case, a trace theorem and
	an inverse inequality can be used on
	$\norm{\nabla_\Gamma\bu^{n+1}}_{L^2(\Gamma^n)}$ to establish
	conditional stability;
	see \cite{Boffi06}.
	From now on, we focus only on the unconditionally stable
	semi-implicit method since the explicit case can be treated similarly.

\section{Discrete-Space Finite Element Approximation}
\label{Sec: Discrete-time finite element formulation}

The spatial discretization of the problem requires two steps.
First, the interface $\Gamma$ is discretized.
Recall that we create a mapping from the interval $[0,L]$ to $\Gamma(s,t)$ with $\bX(0,t) = \bX(L,t)$ that is not necessarily an arc-length parameterization.
We choose equally-spaced points 
$0=s_0 < s_1 < \cdots < s_m = L$ by letting $\hs = L/m$ and $s_j = j\hs$.
We note that the set of points $\{s_j\}_{j=0}^m$ need not be evenly spaced, but is chosen so for computational convenience.
Then the initial immersed boundary $\Gamma^0$ is approximated by a polygon $\Gamma_\hs^0$ with $m$ vertices, where the $j$th vertex is obtained by evaluating $\bX^0_j = \bX(s_j,0)$.
While $\{s_j\}_{j=0}^m$ may be equally-spaced for computational convenience, ${\text{dist}}(\bX^n(s_j),\bX^n(s_{j+1}))$ may not be uniform.

Second, we discretize the bulk fluid.
The polygonal approximation $\Gamma_\hs^0$ divides $\Omega$ into the two approximate subdomains.
As the discrete interface moves, these subdomains will change and are denoted by $\Omega_{i,\hs}^n$ at time $t_n$.
Let $\Tc_h$ partition $\Omega$ into squares with side length $h$.
Then the subset of $\Tc_h$ that overlaps each $\Omega_{i,\hs}^n$ is denoted by
$$\Tc_{i,h}^n := \{K\in \Tc_h : \text{meas}_2(K\cap \Omega_{i,\hs}^n)>0\},$$
where $\text{meas}_d$ denotes the Lebesgue measure in $d$ dimensions.
These sets of elements are further decomposed into two disjoint sets, $\Tc_{i,h}^{n,I}$ and $\Tc_{h}^{n,\Gamma}$.
We define  the set of elements of $\Tc_{i,h}^n$ strictly interior to $\Omega_{i,\hs}$ by
$$\Tc_{i,h}^{n,I} := \{K\in \Tc_{i,h}^n : K\subset \Omega_{i,\hs}^n\}.$$
Similarly, the set of elements of $\Tc_{i,h}^n$ whose interior is intersected by the interface $\Gamma_\hs^n$ is defined by
$$\Tc_{h}^{n,\Gamma} := \{K\in \Tc_{h} : \text{meas}_1(K\cap \Gamma_\hs^n)>0\}.$$
Thus, for each $i$ and $n$ the relationship $\Tc_{i,h}^n = \Tc_{i,h}^{n,I} \cup \Tc_{h}^{n,\Gamma}$ holds.
Consider the union of all elements in $\Tc_{i,h}^n$; we define the interior of each union to be the extended subdomain $\Omega_{i,h}^{n,e}$.
These subdomains $\Omega_{i,h}^{n,e}$ depend on both $\Gamma_\hs^n$ and $h$ and can be formally defined by
$$\Omega_{i,h}^{n,e} := \text{Int}\left(\bigcup_{K\in \Tc_{i,h}^n} K\right).$$
Many approximate quantities depend on $h$ and $\hs$, although only one is used as a subscript.
For example, $\bu_{i,h}^n$, $p_{i,h}^n$, $\Tc_{h}^{n,\Gamma}$, and others depend on both $h$ and $\hs$.
The set of points $\{\bX_{j}^n\}_{j=0}^m$ depends only on $\hs$ and the polygon will be refined as $\hs$ decreases for fixed $L$.

\subsection{Finite element problem}

For each set of elements $\Tc_{i,h}^n$  we define the finite element spaces
$$\bV_{i,h}^n := \big\{\bv\in [C^0(\Omega_{i,h}^{n,e})]^2 : \bv|_K \in [\Qc^2(K)]^2, \, \forall K \in \Tc_{i,h}^n\big\}$$
and
$$M_{i,h}^n := \{q\in L^2(\Omega_{i,h}^{n,e}) : q|_K\in \Pc^1(K), \, \forall K \in \Tc_{i,h}^n\}.$$ 
Recall $\Pc^1(K)$ is the space of linear functions defined on an element $K$.
A general $q\in M_{i,h}$ is discontinuous across each edge of the elements since a linear function in two variables is defined by its value at three points.
The space $\Qc^2(K)$ consists of biquadratic functions defined on the element $K$.
These functions have nine local degrees of freedom and are continuous across the edges of each element.

Additional ``ghost" penalty terms are included to mitigate the jumps of the flux and pressure across the faces of elements, particularly to minimize spiking at the ghost nodes and spurious oscillations.
To add these to the minimizing functional, we first need to define the sets of edges over which these jumps will be minimized, denoted $\Fc_{i,h}^{n,\Gamma}$.
Informally, we describe each $\Fc_{i,h}^{n,\Gamma}$ as the union of all edges shared by two elements, where at least one of the elements is in $\Tc_{h}^{n,\Gamma}$.
Formally, these sets are defined by
$$\Fc_{i,h}^{n,\Gamma} = \{K\cap K' : K \neq K', \, \text{ and } \, K\in \Tc_{h}^{n,\Gamma}, \, K' \in \Tc_{i,h}^n\}.$$
Figure \ref{fig: example meshes} shows $\Fc_{i,h}^{n,\Gamma}$ for each subdomain.

Let $K$ and $K'$ be adjacent square elements with $F=K\cap K'$ and define $\phi$ on $K$ and $\phi'$ on $K'$.
Below, $[\phi] = \phi |_{F} - \phi' |_{F}$ denotes the jump of a function over the face $F$.
Then the stabilizing ghost penalty terms are defined by
\begin{align*}
\bj_{i,h}(\bu_{i,h},\bv_{i,h}) &= \sum_{\ell=0}^1 \sum_{F\in \Fc_{i,h}^{n,\Gamma}} \int_{F} h^{2\ell+1} \jumpF{\partial_{\bnhat_F}^{(\ell)}(\eps(\bu_{i,h})\bnhat_F)}\cdot \jumpF{\partial_{\bnhat_{F}}^{(\ell)}(\eps(\bv_{i,h})\bnhat_F)},
\\
J_{i,h}(p_{i,h},q_{i,h}) &= \sum_{\ell=0}^1 \sum_{F\in \Fc_{i,h}^{n,\Gamma}} \int_{F}  h^{2\ell+1} \jumpF{\partial_{\bnhat_{F}}^{(\ell)} p_{i,h}} \jumpF{\partial_{\bnhat_{F}}^{(\ell)}q_{i,h}}.
\end{align*}

\begin{figure}[]
	\begin{center}
		\makebox[\textwidth][c]{\includegraphics[width=\textwidth]{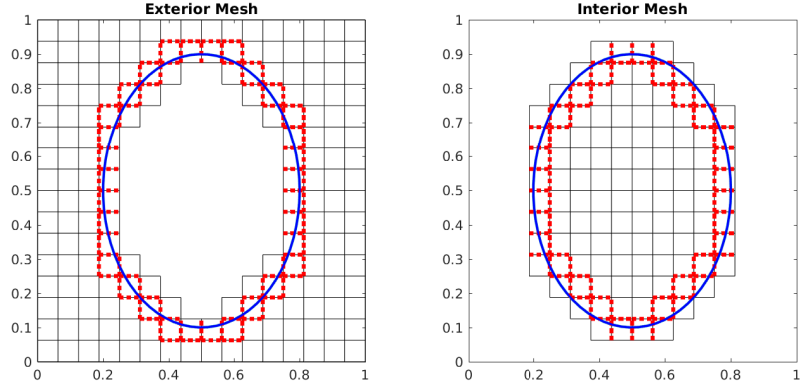}}
		\caption{Plot of example meshes with $\Fc_{i,h}^{n,\Gamma}$ highlighted with a red dotted line for each subdomain.}
		\label{fig: example meshes}
	\end{center}
\end{figure}

Since the interface cuts through elements, we must weakly impose interface conditions across $\Gamma_\hs^n$.
The jump of the stress is incorporated naturally by substitution into the integral resulting from integration by parts.
To impose the weak interface continuity condition and also
	the weak flux continuity condition, we must add mathematically consistent penalty terms
\begin{equation}
\gamma_1 \frac{\mu}{h} \int_{\Gamma_\hs^n} \jump{\bu_h}\cdot\jump{\bv_h}\quad
 \mbox{and}\quad \gamma_2  \frac{h}{\Delta t} \int_{\Gamma_\hs^n} \jump{\bu_h\cdot\bnhat}\jump{\bv_h\cdot\bnhat}
\label{jump of u}
\end{equation}
for some  $\gamma_1>0$ and $\gamma_2>0$. We note that the second
	penalty term above is required to establish the inf-sup condition
	when $h/{\Delta t}$ dominates $\mu/h$.
If $\bu$ is the exact solution, the jump of $\bu$ is equal to zero and \eqref{jump of u} will vanish for the velocity satisfying the system of equations \eqref{ContTime: elasticity}-\eqref{ContTime: movement condition}.
Thus, addition of \eqref{jump of u} will keep the variational formulation consistent with the original problem.
We similarly enforce the weak Dirichlet boundary condition and zero-flux on $\partial\Omega$ by adding the penalty terms
\begin{equation}
\gamma_1 \frac{\mu}{h} \int_{\partial\Omega} (\bu_{1,h}- {\bf 0}) \cdot \bv_{1,h}
\quad \mbox{and} \quad \gamma_2 \frac{h}{\Delta t} \int_{\partial\Omega}
	(\bu_{1,h} - {\bf 0})\cdot\bnhat \,\bv_{1,h}\cdot\bnhat.
\label{jump of u square}
\end{equation}

The parameterization coordinate of the $j$th vertex of $\Gamma_\hs^n$ is denoted $s_j$, where $0\leq j\leq m$, and the corresponding Cartesian coordinate pairs are $\bX_{\hs,j}^n = \bX_\hs(s_j,n\Delta t)$.
Additionally $s_0 = 0$ and $s_m = L$ so that $\bX_{\hs,0}^n = \bX_{\hs,m}^n$ for all $n$.
To ease the notation, we will let $\bX_j^n$ denote the coordinate pair $\bX_{\hs,j}^n$ on the discrete interface.

%

	For both explicit and semi-implicit temporal discretizations, we will find the following simplification of \eqref{Dynamic: discrete time formulation} with $\bY = \bX^n$ useful.
	After integration by parts on $\Gamma_\hs^n$, we simplify the right-hand side of \eqref{Dynamic: discrete time formulation} using the fact that $\partials{\bX^n}{s}$ is constant on each edge of the polygon $\Gamma_\hs^n$.
	If we define
	$$\partials{\bX^n_j}{s} = \frac{\bX^n_{j+1} - \bX^n_{j}}{s_{j+1}-s_j},$$
	then the resulting simplification is written
	\begin{align}
	-\kappa\int_0^L \partials{\bX^n}{s} \partials{}{s}\left\{\bv(\bX_\hs^{n}(s))\right\}\, ds
	&= -\kappa\sum_{j=0}^{m-1} \partials{\bX^n_{j}}{s} \int_{s_j}^{s_{j+1}} \partials{}{s}\left\{\bv(\bX_\hs^{n}(s))\right\}\, ds
	\nonumber\\
	&= -\kappa\sum_{j=0}^{m-1} \partials{\bX^n_j}{s} \left(\left\{\bv(\bX^{n}_{j+1})\right\} - \left\{\bv(\bX^{n}_{j})\right\}\right)
	\nonumber\\
	&= \kappa\sum_{j=0}^{m-1} \left(\partials{\bX^n_{j+1}}{s} - \partials{\bX^n_j}{s} \right) \left\{\bv(\bX^{n}_{j+1})\right\}.
	\label{force summation}
	\end{align}

To simplify notation in Problem \ref{Problem: FEM} we drop the ``$h$" or ``$\hs$" subscript from the discrete approximation of 
the subdomains $\Omega_{h,i}^n$, and 
the quantities $\bu_h^n$, $\bv_h^n$, $p_h^n$, $q_h^n$, and $\bX_\hs^n$. The
	definition of $\gamma_1,\gamma_2,\gamma_{\bu}$ and $\gamma_p$ are given
	later in the paper; see \eqref{gam1 and gam2 def}.
	\begin{algorithm}[t!]
			\refstepcounter{problem}
			{\bf Problem \theproblem: Discrete-time finite element formulation}
			\label{Problem: FEM}
			\begin{enumerate}
				\begin{subequations}
					\item Solve for $\bu^{n+1}_i$ and $p^{n+1}_i$ in
					\begin{alignat}{3}
					&\big<\bF^{n+1},\bv\big>= \kappa\sum_{j=0}^{m-1} \left(\partials{\bX_{j+1}^n}{s} - \partials{\bX_j^n}{s} \right) \left\{\bv^{n+1}(\bX^{n}_{j+1})\right\}
					\label{FEM: RHS function}\\
					\begin{split} \label{FEM: stokes}
					&\sum_{i=1}^2 \frac{1}{\Delta t}(\bu_i^{n+1}-\tilde\bu_i^n,\bv_i)_{\Omega_i^{n}} + \mu(\eps(\bu_i^{n+1}),\eps(\bv_i))_{\Omega_{i}^{n}} - (p_i^{n+1}, \nabla \cdot \bv_i)_{\Omega_{i}^{n}} \\
					&\quad  - \big(\{(\mu\eps(\bu^{n+1})-p^{n+1})\bnhat\},\jump{\bv}\big)_{\Gamma_\hs^{n}} - \big(\jump{\bu^{n+1}},\{\mu\eps(\bv)\bnhat\}\big)_{\Gamma_\hs^{n}}
					\\
					&\quad   - \big((\mu\eps(\bu_1^{n+1})-p_1^{n+1})\bnhat,\bv_1\big)_{\partial\Omega}- \big(\bu_1^{n+1},\mu\eps(\bv_1)\bnhat\big)_{\partial\Omega}
					\\
					&\quad  + \gamma_{\bu} \bjj_{i,h}^n(\bu_i^{n+1},\bv_i) +   \gamma_1 \frac{\mu}{h} ( \jump{\bu^{n+1}},\jump{\bv})_{\Gamma_\hs^n} +  \gamma_1 \frac{\mu}{h} ( \bu_1^{n+1}, \bv_1)_{\partial\Omega}
					\\
					&   \quad  + \gamma_2\frac{h}{\Delta t} ( \jump{\bu^{n+1}\cdot\bnhat},\jump{\bv \cdot\bnhat})_{\Gamma_\hs^n} + \gamma_2\frac{h}{\Delta t} ( \bu_1^{n+1}\cdot\bnhat , \bv_1\cdot \bnhat)_{\partial\Omega}  
					\\
					& \quad + \nu\kappa\Delta t \int_{\Gamma_{\hs}}^n \partials{\bu^{n+1}(\bX^n)}{s}\partials{}{s}\avg{\bv(\bX^n(s))}\, ds = \ip{\bF^{n+1},\bv}
					\end{split}
					\\
					\begin{split} \label{FEM: incompressibility}
					&\sum_{i=1}^2 -(\nabla\cdot \bu_i^{n+1}, q_i)_{\Omega_{i}^{n}} + \big(\jump{\bu^{n+1}},\{q\bnhat\}\big)_{\Gamma_\hs^{n}} + (\bu_1^{n+1},q_1\bnhat)_{\partial\Omega}\\
					&\qquad- \gamma_p J_{i,h}^n(p_i^{n+1},q_i) = 0
					\end{split}
					\end{alignat}
					for all $\bv_i^{n+1} \in \bV_{i,h}^{n}$ and $q_i^{n+1} \in M_{i,h}^{n}$, $i= 1, 2$.
					
					\item Solve for $\bX_j^{n+1}$ using
					\begin{alignat}{2}
					\bX^{n+1}_j = \bX^n_j  + \Delta t \{\bu^{n+1}(\bX^{n}_j)\} \qquad \text{ for } j = 0,\dots, m.
					\label{FEM: discrete time boundary movement}
					\end{alignat}
				\end{subequations}
			\end{enumerate}
		
	\end{algorithm}

	\begin{remark}
		To distinguish between the explicit and semi-implicit methods in Problem \ref{Problem: FEM}, we define a parameter $\nu$ which can be set to either 0 or 1.
		Setting $\nu=1$ yields the semi-implicit method, while $\nu=0$ leaves us with the explicit method.
	\end{remark}

\subsection{Energy stability of the FEM}

We are able to prove the unconditional stability of the semi-implicit method in Problem \ref{Problem: FEM} under the assumption below, similar to that seen in \cite{Massing14}.\\

\noindent {\bf Assumption 1.} Given $K\in \Tc_h^{n,\Gamma}$,  there exists $K'\in \Tc_{i,h}^{n,I}$,  an integer $N>0$, and $N$ elements $\{K_k\}_{k=1}^N$ such that $K_1 = K$, $K_N=K'$ and $K_k\cap K_{k+1} \subset \overline{\Omega_{1,h}^{n,e}\cap \Omega_{2,h}^{n,e}}$.
\\

The next lemma is necessary to bound the strain on the extended subdomain $\Omega_{i,h}^{n,e}$ by the strain on the original subdomain $\Omega_{i,\hs}^n$.
The result shows why it is necessary to include $j_{i,h}(\bu,\bv)$ for stability.

\begin{lemma}
	\label{Lemma: extended bound}
	Suppose $\bv \in \bV_{i,h}^n$ is continuous on $\Omega_{i,h}^{n,e}$ and Assumption 1 
	holds for $\Tc_{i,h}^n$.
	Then we have the following estimate:
	\begin{equation*}
	\norm{\eps(\bv)}_{L^2(\Omega^{n,e}_{i,h})}^2 \leq C_{\eps}\left( \norm{\eps(\bv)}_{L^2(\Omega_{i,\hs}^n)}^2 + \bjj_{i,h}(\bv,\bv)  \right),
	\end{equation*}
	where $C_{\eps}$  depends on neither $\bv$ nor $h$.
	\label{DSDT: extended omega estimate}
\end{lemma}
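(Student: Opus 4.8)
The plan is a ghost-penalty chain argument of the type in \cite{Massing14}. First I would localize. Since every $K\in\Tc_{i,h}^{n,I}$ satisfies $K\subset\Omega_{i,\hs}^n$, the set $\Omega_{i,h}^{n,e}\setminus\Omega_{i,\hs}^n$ is covered by the fictitious parts of the cut elements, so
\begin{equation*}
\norm{\eps(\bv)}^2_{L^2(\Omega_{i,h}^{n,e})}\le \norm{\eps(\bv)}^2_{L^2(\Omega_{i,\hs}^n)}+\sum_{K\in\Tc_{h}^{n,\Gamma}}\norm{\eps(\bv)}^2_{L^2(K)},
\end{equation*}
and it suffices to control $\norm{\eps(\bv)}^2_{L^2(K)}$ for each cut element $K$ by a constant times $\norm{\eps(\bv)}^2_{L^2(\Omega_{i,\hs}^n)}+\bjj_{i,h}(\bv,\bv)$.

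Second, I would prove a one-face estimate: if $K,K'$ are two squares of side $h$ sharing the edge $F$ and $\bv|_K,\bv|_{K'}\in[\Qc^2]^2$ are continuous across $F$, then
\begin{equation*}
\norm{\eps(\bv)}^2_{L^2(K)}\le C\left(\norm{\eps(\bv)}^2_{L^2(K')}+\sum_{\ell=0}^1 h^{2\ell+1}\norm{\jumpF{\partial_{\bnhat_F}^{(\ell)}(\eps(\bv)\bnhat_F)}}^2_{L^2(F)}\right)
\end{equation*}
with $C$ absolute. This is the heart of the argument, and I would establish it by mapping the two-element patch $K\cup K'$ to a fixed reference patch, where all the objects live in finite-dimensional polynomial spaces and the relevant norms are equivalent; the content is that $\eps(\bv)|_K$ is \emph{determined} by $\eps(\bv)|_{K'}$ together with the two penalized jump quantities. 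Continuity of $\bv$ across $F$ removes the jumps of all tangential derivatives on $F$, so $\jumpF{\eps(\bv)\bnhat_F}$ and $\jumpF{\partial_{\bnhat_F}(\eps(\bv)\bnhat_F)}$ (plus an inverse inequality on $F$ to trade tangential derivatives of these traces for the traces themselves) recover the full jumps of $\eps(\bv)$ and $\partial_{\bnhat_F}\eps(\bv)$ across $F$; these, together with the matching of the polynomial $\bv|_{K'}$ itself on $F$, pin down $\bv|_K$ and hence $\eps(\bv)|_K$. The weights $h^{2\ell+1}$ are exactly the ones produced by the scaling to the reference patch.

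Third, I would chain and sum. For a cut element $K$, Assumption 1 supplies a path $K=K_1,\dots,K_N=K'\in\Tc_{i,h}^{n,I}$ whose consecutive faces $F_k=K_k\cap K_{k+1}$ all lie in $\Fc_{i,h}^{n,\Gamma}$, since $F_k\subset\overline{\Omega_{1,h}^{n,e}\cap\Omega_{2,h}^{n,e}}$ forces at least one of $K_k,K_{k+1}$ to be a cut element. Iterating the one-face estimate along the path yields
\begin{equation*}
\norm{\eps(\bv)}^2_{L^2(K)}\le C^{N}\left(\norm{\eps(\bv)}^2_{L^2(K')}+\sum_{k=1}^{N-1}\sum_{\ell=0}^1 h^{2\ell+1}\norm{\jumpF{\partial_{\bnhat_{F_k}}^{(\ell)}(\eps(\bv)\bnhat_{F_k})}}^2_{L^2(F_k)}\right).
\end{equation*}
The path length $N$ is uniformly bounded under Assumption 1 (it depends only on the mesh geometry, not on $\bv$ or $h$), so $C^N\le C_0$. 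Summing over all cut elements $K\in\Tc_{h}^{n,\Gamma}$ and using that each interior element $K'$ and each face $F\in\Fc_{i,h}^{n,\Gamma}$ is reused by only a bounded number of chains (finite overlap), one gets $\sum_K\norm{\eps(\bv)}^2_{L^2(K')}\le C_1\norm{\eps(\bv)}^2_{L^2(\Omega_{i,\hs}^n)}$ and the accumulated face terms are bounded by $C_1\,\bjj_{i,h}(\bv,\bv)$. Combined with the localization step this gives the claim with $C_\eps$ independent of $\bv$ and $h$.

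I expect the one-face estimate to be the main obstacle, precisely because $\bjj_{i,h}$ controls only the normal component $\eps(\bv)\bnhat_F$ and its normal derivative rather than the whole strain tensor; the argument must exploit the $C^0$-continuity of $\bv$ to upgrade that normal data, together with the tangential information continuity supplies for free, into control of the entire polynomial $\bv$ across $F$. Everything else is bookkeeping: scaling to a reference patch, norm equivalence on finite-dimensional spaces, and a finite-overlap count.
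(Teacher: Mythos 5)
Your proposal is correct and follows the same skeleton as the paper's proof: a one-face estimate across a shared edge, iterated along the element chains supplied by Assumption 1, then summed over the cut elements. The only real divergence is local, in how the one-face estimate is justified: the paper does not prove it from scratch but imports the scalar ghost-penalty inequality (Lemma 5.1 of Massing et al.) and applies it componentwise to the entries $\partial_x u$, $\tfrac12(\partial_x v+\partial_y u)$, $\partial_y v$ of $\eps(\bv)$, using $C^0$-continuity across $F$ to identify the surviving jump terms with $\jumpF{\partial_{\bnhat_F}^{(\ell)}(\eps(\bv)\bnhat_F)}$; you instead propose a self-contained unisolvency/norm-equivalence argument on a two-element reference patch. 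Both work, and your version buys independence from the cited scalar lemma at the cost of verifying by hand that $\eps(\bv)|_{K'}$ plus the two penalized jump quantities determine $\eps(\bv)|_K$ (which does hold for $[\Qc^2]^2$, since continuity plus the $\ell=0,1$ jumps of $\eps(\bv)\bnhat_F$ pin down $\bv|_K$ and all its normal derivatives through order two on $F$). One point in your favor: your explicit remark that a tangential inverse inequality on $F$ is needed to absorb jumps like $\jumpF{\partial_{\bnhat_F}\partial_y v}=\partial_y\jumpF{\partial_{\bnhat_F} v}$ addresses a mixed-derivative term that the paper's statement ``the jump of all $y$ derivatives is zero'' silently passes over.
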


\begin{proof}
	Let $K_1$ and $K_2$ be neighboring square elements with a shared edge $F = K_1\cap K_2$.
	We make use of the following result from Lemma 5.1 in \cite{Massing14}:
	\begin{equation}
	\norm{v}_{L^2(K_1)}^2 \leq C_m\left(\norm{v}_{L^2(K_2)}^2 + \sum_{0\leq\ell\leq p} h^{2\ell+1} \int_F [\partial_{\bnhat_F}^\ell v]^2 \right),
	\label{Massing inequality}
	\end{equation} 
	where $v|_{K_1}$, $v|_{K_2}$  are polynomial functions of degree less than or equal to $p$.
	With $\bV_{i,h}^n$ defined as above, the summation in \eqref{Massing inequality} simplifies to $p=1$.
	Denoting $\bv = (u,v)$, we apply the inequality \eqref{Massing inequality} directly to each term of $\eps(\bv):\eps(\bv) = (\partial_x u)^2 + \frac{1}{2}(\partial_x v + \partial_y u)^2 + (\partial_y v)^2$ and add the inequalities.
	Note that on any vertical edge the jump of all $y$ derivatives will be zero because $\bv$ is continuous across $F$ and $\bv|_F$ is simply a polynomial in each component.
	Also on a vertical edge the unit normal vector is $\bnhat_F = (1,0)$ and it follows that
	$$[\partial_{\bnhat_F}^\ell (\eps(\bv)\bnhat_F)]\cdot [\partial_{\bnhat_F}^\ell (\eps(\bv)\bnhat_F)] = ([\partial_x^\ell \partial_x u])^2 + \frac{1}{4}([\partial_x^\ell (\partial_x v + \partial_y u)])^2.$$
	The resulting inequality is
	\begin{align*}
	&\norm{\eps(\bv)}_{L^2(K_1)}^2 = \int_{K_1} (\partial_x u)^2 + \frac{1}{2}(\partial_x v + \partial_y u)^2 + (\partial_y v)^2 \\
	& \qquad \quad \leq C_m\bigg(\int_{K_2} (\partial_x u)^2 + \frac{1}{2}(\partial_x v + \partial_y u)^2 + (\partial_y v)^2\\
	&\qquad \quad \qquad \qquad + \sum_{\ell=0}^1 h^{2\ell+1}\int_F [\partial_{\bnhat_F}^\ell \partial_x u]^2 + \frac{1}{2}[\partial_{\bnhat_F}^\ell (\partial_x v + \partial_y u)]^2 \bigg)\\
	&\qquad \quad \leq 2C_m\left(\norm{\eps(\bv)}_{L^2(K_2)}^2 + \sum_{\ell=0}^1 h^{2\ell+1}\int_F [\partial_{\bnhat_F}^\ell (\eps(\bv)\bnhat_F)]^2\right).
	\end{align*}
	Similarly for any horizontal edge, we let $\bnhat_{F} = (0,1)$ and use the fact that $[\partial_x u]=0$ to get the same inequality.
	
	Using Assumption 1, we are able to find a sequence of at most $N$ adjacent elements leading from an element $K_1\in \Tc_h^{n,\Gamma}$ to an element $K_N \in \Tc_{i,h}^{n,I}$.
	Applying the above inequality across each of the edges $F_k = K_k\cap K_{k+1}$, for $1\leq k\leq N-1$, we have
	\begin{equation}
	\norm{\eps(\bv)}_{L^2(K_1)}^2 \leq (2C_m)^N\left( \norm{\eps(\bv)}_{L^2(K_N)}^2 +  \sum_{k = 1}^N \sum_{\ell=0}^1 \int_{F_k} [\partial_{\bnhat_{F_k}}^\ell \eps(\bv)\bnhat_{F_k}]^2\right).
	\label{neighbor bound}
	\end{equation}
	Repeating \eqref{neighbor bound} for all $K\in \Tc_h^{n,\Gamma}$  and denoting $C_{\eps} = (2C_m)^N$ completes the proof.
\end{proof}

Recall the definition of the norm over the reference configuration $R = [0,L]$.
For a quantity $\phi(s)$ that is constant on each linear segment of the polygonal $\Gamma_\hs^n$, we can simplify \eqref{R norm} to
$$\normgam{\phi}^2 = \sum_{j=0}^{m-1} \left(\phi(s_j)\right)^2\, (s_{j+1}-s_j).$$
Above $\phi(s_j)$ denotes the value of $\phi(s)$ on the interval $(s_j,s_{j+1})$.
Using the definition of the energy \eqref{energy definition} we are able to prove the following theorem.

\begin{theorem}\label{DSDT: semi-implicit theorem}
	Let $\bu_{i,h}^{n+1}$, $p_{i,h}^{n+1}$, and $\bX_\hs^{n+1}$ be solutions to \eqref{FEM: RHS function}-\eqref{FEM: discrete time boundary movement} at time $t^{n+1}$ with $\nu = 1$ and assume Assumption 1 holds.  Then the following inequality holds:
	\begin{align}
	\begin{split}\label{DSDT: semi-implicit inequality}
	E^{n+1} \leq &E^n - \frac{1}{2}      \norm{\bu_h^{n+1}-\bu_h^{n}}^2_{L^2(\Omega)} + \frac{1}{2} 
	{\kappa}  \normgam{\partials{\bX_\hs^{n+1}}{s}-\partials{\bX_\hs^{n}}{s}}^2                                                               \\
	& + \Delta t \bigg(  \gamma_1\frac{\mu}{h}\int_{\Gamma_\hs^n} \jump{\bu_h^{n+1}}^2\, ds 
	+          \gamma_1\frac{\mu}{h} \int_{\partial\Omega} (\bu_{1,h}^{n+1})^2 \, ds
	\\ & +
	\gamma_2 \frac{h}{\Delta t} 
	\int_{\Gamma_\hs^n} \jump{\bu_h^{n+1}\cdot \bnhat }^2\, ds 
	+ \gamma_2 \frac{h}{\Delta t} 
	\int_{\partial\Omega} (\bu_{1,h}^{n+1}\cdot\bnhat)^2 \, ds
	\\
	& +          \sum_{i=1}^2 \left(\gamma_{\bu} \bjj_{i,h}(\bu_{i,h}^{n+1},\bu_{i,h}^{n+1})  + \gamma_p J_{i,h}(p_{i,h}^{n+1},p_{i,h}^{n+1})\right) \bigg).
	\end{split}
	\end{align}
\end{theorem}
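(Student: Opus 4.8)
The plan is to mimic the proof of Lemma~\ref{Dynamic: semi-implicit theorem} --- test with the discrete solution itself and polarize --- and then to control the two terms that now survive because $\jump{\bu^{n+1}}$ and $\bu_1^{n+1}|_{\partial\Omega}$ are no longer zero. First I would set $\bv=\bu^{n+1}$ in \eqref{FEM: stokes} and $q=p^{n+1}$ in \eqref{FEM: incompressibility} with $\nu=1$, and subtract. The volume term $(p^{n+1},\nabla\cdot\bu^{n+1})$ cancels, and the pressure parts $\big(\avg{p^{n+1}\bnhat},\jump{\bu^{n+1}}\big)_{\Gamma_\hs^n}$ and $\big(p_1^{n+1}\bnhat,\bu_1^{n+1}\big)_{\partial\Omega}$ of the Nitsche terms cancel against their counterparts coming from \eqref{FEM: incompressibility}; what is left of the symmetric interface and boundary terms is $-2\big(\avg{\mu\eps(\bu^{n+1})\bnhat},\jump{\bu^{n+1}}\big)_{\Gamma_\hs^n}-2\big(\mu\eps(\bu_1^{n+1})\bnhat,\bu_1^{n+1}\big)_{\partial\Omega}$. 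The ghost penalty, the $\gamma_1$- and $\gamma_2$-penalty terms, the pressure ghost penalty (which becomes $+\gamma_p J_{i,h}(p^{n+1},p^{n+1})$ after the subtraction) and the semi-implicit term $\kappa\Delta t\,\normgam{\partials{}{s}\avg{\bu^{n+1}(\bX^n)}}^2$ all survive with a definite sign, and by \eqref{force summation} the right-hand side equals $-\kappa\int_0^L\partials{\bX^n}{s}\,\partials{}{s}\avg{\bu^{n+1}(\bX_\hs^n(s))}\,ds$.

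Next I would apply the identity $2(a-b,a)=\abs a^2+\abs{a-b}^2-\abs b^2$ to the mass term $\tfrac1{\Delta t}(\bu_i^{n+1}-\tilde\bu_i^n,\bu_i^{n+1})_{\Omega_i^n}$, summing over $i$ and using that $\Omega$ is partitioned by $\Omega_{1,\hs}^n$ and $\Omega_{2,\hs}^n$ at every step to identify $\sum_i\norm{\bu_i^{n+1}}_{L^2(\Omega_{i,\hs}^n)}^2=\norm{\bu^{n+1}}_{L^2(\Omega)}^2$ and, likewise, $\sum_i\norm{\tilde\bu_i^n}_{L^2(\Omega_{i,\hs}^n)}^2=\norm{\bu^n}_{L^2(\Omega)}^2$. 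Using the discrete movement law \eqref{FEM: discrete time boundary movement} in the form $\partials{\bX^{n+1}_j}{s}=\partials{\bX^n_j}{s}+\Delta t\,\partials{}{s}\avg{\bu^{n+1}(\bX^n_j)}$ on each segment of $\Gamma_\hs^n$, the surviving semi-implicit term and the right-hand side combine and are simplified exactly as in Lemma~\ref{Dynamic: semi-implicit theorem} into $\tfrac{\kappa}{2\Delta t}\big(\normgam{\partials{\bX_\hs^{n+1}}{s}}^2+\normgam{\partials{\bX_\hs^{n+1}}{s}-\partials{\bX_\hs^{n}}{s}}^2-\normgam{\partials{\bX_\hs^{n}}{s}}^2\big)$. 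Multiplying the resulting equation by $\Delta t$ and using \eqref{energy definition} gives the exact identity
\begin{align*}
E^{n+1} &= E^n - \tfrac12\norm{\bu^{n+1}-\bu^n}_{L^2(\Omega)}^2 - \tfrac{\kappa}{2}\normgam{\partials{\bX_\hs^{n+1}}{s}-\partials{\bX_\hs^{n}}{s}}^2 - \Delta t\,\mu\sum_{i=1}^2\norm{\eps(\bu_i^{n+1})}_{L^2(\Omega_{i,\hs}^n)}^2 \\
&\quad {}- \Delta t\,\Pi + 2\Delta t\big(\avg{\mu\eps(\bu^{n+1})\bnhat},\jump{\bu^{n+1}}\big)_{\Gamma_\hs^n} + 2\Delta t\big(\mu\eps(\bu_1^{n+1})\bnhat,\bu_1^{n+1}\big)_{\partial\Omega},
\end{align*}
where $\Pi\ge 0$ collects $\sum_i\gamma_{\bu}\bjj_{i,h}(\bu_i^{n+1},\bu_i^{n+1})$, the two $\gamma_1$-terms, the two $\gamma_2$-terms and $\sum_i\gamma_p J_{i,h}(p_i^{n+1},p_i^{n+1})$.

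The heart of the proof is bounding the two residual products. I would apply Cauchy--Schwarz on $\Gamma_\hs^n$, respectively on $\partial\Omega$, then a discrete trace inequality on the cut elements: since $\Gamma_\hs^n$ meets each $K\in\Tc_h^{n,\Gamma}$ in a single line segment, $h^{1/2}\norm{\eps(\bu_i^{n+1})\bnhat}_{L^2(\Gamma_\hs^n\cap K)}\le C\,\norm{\eps(\bu_i^{n+1})}_{L^2(K)}$, the $\partial\Omega$ piece being analogous and simpler because $\partial\Omega$ is mesh-aligned. Summing over cut elements and using $\sum_{K\in\Tc_h^{n,\Gamma}}\norm{\eps(\bu_i^{n+1})}_{L^2(K)}^2\le\norm{\eps(\bu_i^{n+1})}_{L^2(\Omega_{i,h}^{n,e})}^2$, I would invoke Lemma~\ref{Lemma: extended bound} (which is precisely why Assumption~1 and the ghost penalty are needed) to get $\norm{\eps(\bu_i^{n+1})}_{L^2(\Omega_{i,h}^{n,e})}^2\le C_{\eps}\big(\norm{\eps(\bu_i^{n+1})}_{L^2(\Omega_{i,\hs}^n)}^2+\bjj_{i,h}(\bu_i^{n+1},\bu_i^{n+1})\big)$. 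Combined with Young's inequality and the definitions of $\gamma_{\bu},\gamma_1$ in \eqref{gam1 and gam2 def}, this yields
\begin{align*}
2\Delta t&\big(\avg{\mu\eps(\bu^{n+1})\bnhat},\jump{\bu^{n+1}}\big)_{\Gamma_\hs^n}+2\Delta t\big(\mu\eps(\bu_1^{n+1})\bnhat,\bu_1^{n+1}\big)_{\partial\Omega}\le\Delta t\,\mu\sum_{i=1}^2\norm{\eps(\bu_i^{n+1})}_{L^2(\Omega_{i,\hs}^n)}^2 \\
&\quad {}+2\Delta t\Big(\sum_{i=1}^2\gamma_{\bu}\bjj_{i,h}(\bu_i^{n+1},\bu_i^{n+1})+\gamma_1\tfrac{\mu}{h}\norm{\jump{\bu^{n+1}}}_{L^2(\Gamma_\hs^n)}^2+\gamma_1\tfrac{\mu}{h}\norm{\bu_1^{n+1}}_{L^2(\partial\Omega)}^2\Big).
\end{align*}

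Finally, substituting this bound into the identity above, the $\Delta t\,\mu\sum_i\norm{\eps(\bu_i^{n+1})}^2_{L^2(\Omega_{i,\hs}^n)}$ terms cancel; of the three penalty contributions produced by the bound, one copy cancels the corresponding part of $-\Delta t\,\Pi$ and one copy remains with a $+$ sign; and the still-negative terms --- $-\tfrac{\kappa}{2}\normgam{\partials{\bX_\hs^{n+1}}{s}-\partials{\bX_\hs^{n}}{s}}^2$, the two $\gamma_2$ normal-jump penalties, and $\sum_i\gamma_p J_{i,h}(p_i^{n+1},p_i^{n+1})$ --- are each replaced by their own absolute value (a deliberately lossy step), which produces exactly \eqref{DSDT: semi-implicit inequality}. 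I expect the third paragraph to be the main obstacle: the discrete trace estimate must hold uniformly over all cut configurations (polygonality of $\Gamma_\hs^n$ helps, but thin slivers still require the standard cut-element trace/inverse estimate), and the constant $C_{\eps}$ from Lemma~\ref{Lemma: extended bound} must be compatible with the choice of $\gamma_{\bu},\gamma_1$ made in \eqref{gam1 and gam2 def}; a minor bookkeeping subtlety is the identification of the $L^2(\Omega)$ quantities in the second paragraph, which works only because $\Omega$ is partitioned by the two polygonal subdomains at every time level.
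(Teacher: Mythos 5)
Your proposal follows the paper's proof essentially step for step: test with $(\bu_h^{n+1},p_h^{n+1})$ and subtract, polarize the mass and force terms using the discrete movement law \eqref{FEM: discrete time boundary movement} together with \eqref{force summation}, and control the surviving Nitsche consistency terms on $\Gamma_\hs^n$ and $\partial\Omega$ by Cauchy--Schwarz, the cut-element trace/inverse estimates, and Lemma \ref{Lemma: extended bound}, absorbing the resulting $\mu\norm{\eps(\bu_h^{n+1})}^2$ contributions into the viscous dissipation through the choice of $\gamma_1$ in \eqref{gam1 and gam2 def}. The only cosmetic difference is that you state explicitly the final lossy sign adjustments on the $\kappa$-difference term and the remaining penalty terms needed to reach the exact form of \eqref{DSDT: semi-implicit inequality}, which the paper leaves implicit.
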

\begin{proof}
	We first let $\bv_h = \bu_h^{n+1}$ and $q_h=p_h^{n+1}$ in \eqref{FEM: RHS function}-\eqref{FEM: incompressibility} and subtract \eqref{FEM: incompressibility} from \eqref{FEM: stokes}.
	After cancellation due to the symmetry of the $L^2$ inner product and some simplification we have
	\begin{align}
	\begin{split}
	\sum_{i=1}^2 &\frac{1}{\Delta t} \left(\bu^{n+1}_{i,h}-\bu^n_{i,h}, \bu^{n+1}_{i,h}\right)_{\Omega_{i,\hs}^n} + \sum_{i=1}^2\mu \norm{\eps(\bu_{i,h}^{n+1})}_{L^2(\Omega_{i,\hs}^n)} \\
	& - 2\left( \jump{\bu_h^{n+1}}, \{\mu\eps(\bu_h^{n+1})\bnhat\}\right)_{\Gamma_\hs^n} - 2\left( \bu_{1,h}^{n+1}, \mu\eps(\bu_{1,h}^{n+1})\bnhat_1\right)_{\partial \Omega} \\
	& \quad + \gamma_{\bu} \bjj_h(\bu_{i,h}^{n+1},\bu_{i,h}^{n+1}) + \gamma_p J_h(p_{i,h}^{n+1},p_{i,h}^{n+1})\\
	&+ \gamma_1\frac{\mu}{h}\int_{\Gamma_\hs^n} \jump{\bu_h^{n+1}}^2 + \gamma_1 \frac{\mu}{h} \int_{\partial\Omega} (\bu_{1,h}^{n+1})^2\\
	&+ \gamma_2\frac{h}{\Delta t}\int_{\Gamma_\hs^n} \jump{\bu_h^{n+1}}^2 + \gamma_2 \frac{h}{\Delta t} \int_{\partial\Omega} (\bu_{1,h}^{n+1})^2\\  
	&= \kappa\sum_{j=0}^{m-1} \left(\partials{\bX^{n+1}_{j+1}}{s} - \partials{\bX^{n+1}_j}{s} \right) \left\{\bu_h^{n+1}(\bX^{n}_{j+1})\right\}.
	\end{split}
	\label{BE energy thing}
	\end{align}
		The term on the right-hand side of \eqref{BE energy thing} is obtained using \eqref{force summation} and \eqref{FEM: discrete time boundary movement} as follows:
		\begin{align*}
		\kappa\sum_{j=0}^{m-1}& \left(\partials{\bX^{n+1}_{j+1}}{s} - \partials{\bX^{n+1}_j}{s} \right) \left\{\bu_h^{n+1}(\bX^{n}_{j+1})\right\}\\
		& =
		-\kappa\int_0^L \partials{\bX^{n+1}}{s} \partials{}{s}\left\{\bu_h^{n+1}(\bX_\hs^{n}(s))\right\}\, ds\\
		&= -\kappa\int_0^L \partials{\bX^{n}}{s} \partials{}{s}\left\{\bu_h^{n+1}(\bX_\hs^{n}(s))\right\}\, ds\\
		&\qquad\qquad 
		- \kappa\Delta t \int_0^L \avg{\partials{\bu_h^{n+1}(\bX^n)}{s}} \cdot \avg{\partials{\bu_h^{n+1}(\bX^n)}{s}}\, ds\\
		&=
		\kappa\sum_{j=0}^{m-1} \left(\partials{\bX^n_{j+1}}{s} - \partials{\bX^n_j}{s} \right) \left\{\bu_h^{n+1}(\bX^{n}_{j+1})\right\}\\
		&\qquad \qquad
		- \frac{\kappa\Delta t}{2} \int_0^L \avg{\partials{\bu_h^{n+1}(\bX^n)}{s}} \cdot \avg{\partials{\bu_h^{n+1}(\bX^n)}{s}}\, ds.
		\end{align*}

	Following the same simplification as in Lemma \ref{Dynamic: semi-implicit theorem} we have
	$$\sum_{i=1}^2 \left(\bu^{n+1}_{i,h}-\bu^n_{i,h}, \bu^{n+1}_{i,h}\right)_{\Omega_{i,\hs}^n}  = \frac{1}{2}\left( \norm{\bu_h^{n+1}}_{L^2(\Omega)}^2 + \norm{\bu_h^{n+1} - \bu_h^n}_{L^2(\Omega)}^2 - \norm{\bu_h^{n}}_{L^2(\Omega)}^2\right)$$
	and
	\begin{align*}
	\kappa\sum_{j=0}^{m-1} &\left(\partials{\bX^{n+1}_{j+1}}{s} - \partials{\bX^{n+1}_j}{s} \right) \left\{\bu_h^{n+1}(\bX^{n}_{j+1})\right\} \\ 
	& \qquad=\frac{\kappa}{2\Delta t}\left( \normgam{\partials{\bX_\hs^{n+1}}{s}}^2 + \normgam{\partials{\bX_\hs^{n+1}}{s} - \partials{\bX_\hs^n}{s}}^2 - \normgam{\partials{\bX_\hs^{n}}{s}}^2\right).
	\end{align*}
	Now we look to control the integrals over the boundaries.
	First, using the Cauchy-Schwarz inequality and a generalized inequality of arithmetic-geometric means we have for any $\gamma_1>0$
	\begin{align*}
	&2\left( \jump{\bu_h^{n+1}}, \{\mu\eps(\bu_h^{n+1})\bnhat\}\right)_{\Gamma_\hs^n}
	\leq \frac{\gamma_1\mu}{h}\norm{\jump{\bu_h^{n+1}}}_{L^2(\Gamma_\hs^n)}^2 + \frac{h\mu}{\gamma_1}\norm{\avg{\eps(\bu_{h}^{n+1})}}_{L^2(K\cap\Gamma_\hs^n)}^2\\
	& \qquad \qquad \qquad  \qquad = \frac{\gamma_1\mu}{h}\norm{\jump{\bu_h^{n+1}}}_{L^2(\Gamma_\hs^n)}^2  + \sum_{i=1}^2 \sum_{K\in \Tc_{i,h}^n}\frac{h\mu}{\gamma_1}\norm{\eps(\bu_{i,h}^{n+1})}_{L^2(K\cap\Gamma_\hs^n)}^2.
	\end{align*}
	Now we look to bound the norm of the average of the symmetric gradient over the interface, which has been separated into an interior and exterior component using the triangle inequality.
	For some function $v\in H^1(K)$, with the help of Lemma 1 in \cite{Guzman17} noting that the polygonal interface $\Gamma_\hs^n$ is Lipschitz,
	\begin{align}
	\norm{v}_{L^2(K\cap \Gamma_\hs^n)}^2 &\leq C_1\left( h^{-1}\norm{v}_{L^2( K)}^2 + h\norm{\nabla v}_{L^2( K)}^2 \right)\nonumber\\
	&\leq C_1 h^{-1}\norm{v}_{L^2( K)}^2 + C_1\cdot \tilde{C}_I h^{-1} \norm{v}_{L^2( K)}^2.\label{DSDT: interface bound}
	\end{align}
	Here, $C_1$ is the constant from \cite{Guzman17} and $\tilde{C}_I$ is the constant from the well-known finite element inverse inequality
	$$\norm{\nabla v}_{L^2( K)}^2 \leq \tilde{C}_I h^{-2} \norm{v}_{L^2(K)}^2.$$
	Letting $v$ be each component of the symmetric part of the gradient in \eqref{DSDT: interface bound} and adding the resulting inequalities and denote
	${C}_I = \tilde{C}_I +1 $ yields
	$$\norm{\eps(\bu^{n+1}_{i,h})}_{L^2(K\cap\Gamma_\hs^n)}^2 \leq C_1 {C}_I h^{-1} \norm{\eps(\bu^{n+1}_{i,h})}_{L^2(K)}^2$$
	and we can control the inner product over $\Gamma_\hs^n$ by
	\begin{align}
	\left( \jump{\bu_h^{n+1}}, \{\mu\eps(\bu_h^{n+1})\}\right)_{\Gamma_\hs^n}
	&\leq
	\frac{\gamma_1 \mu}{h} \norm{\jump{\bu_h^{n+1}}}_{L^2(\Gamma_\hs^n)}^2
	+ \frac{C_1 C_I}{\gamma_1} \sum_{i=1}^2 \sum_{K\in \Tc_h} \mu \norm{\eps(\bu_{i,h}^{n+1})}_{L^2(K)}^2 \nonumber\\
	&= \frac{\gamma_1 \mu}{h} \norm{\jump{\bu_h^{n+1}}}_{L^2(\Gamma_\hs^n)}^2
	+  \frac{C_1 C_I}{\gamma_1} \sum_{i=1}^2 \mu\norm{\eps(\bu_{i,h}^{n+1})}_{L^2(\Omega_{i,\hs}^{n,e})}^2 \nonumber \\
	&\leq \frac{\gamma_1 \mu}{h} \norm{\jump{\bu_h^{n+1}}}_{L^2(\Gamma_\hs^n)}^2
	+ \frac{ C_1 C_I C_{\eps}}{\gamma_1} \bigg( \mu\norm{\eps(\bu_h^{n+1})}_{L^2(\Omega)}^2\nonumber\\
	&\quad \qquad + \sum_{i=1}^2 \mu j_{i,h}(\bu_{i,h}^{n+1},\bu_{i,h}^{n+1})\bigg),\nonumber
	\end{align}
	where the final inequality is the result of Lemma \ref{Lemma: extended bound}.
	Similarly, we have
	\begin{equation*}
	2 \left( \bu_{1,h}^{n+1}, \mu\eps(\bu_{1,h}^{n+1})\right)_{\partial \Omega} \leq \frac{\gamma_1 \mu}{h} \norm{\bu_{1,h}^{n+1}}_{L^2(\partial\Omega)}^2 + \frac{ C_1 C_I C_{\eps}}{\gamma_1} \mu \norm{\eps(\bu_h^{n+1})}_{L^2(\Omega)}^2.
	\end{equation*}
	Now we combine all of these inequalities and choose $\gamma_1$
	so that  $\frac{2C_1 C_I C_{\eps}}{\gamma_1} \leq 1$ and multiply both sides by $\Delta t$ to get \eqref{DSDT: semi-implicit inequality}.
\end{proof}

	To establish the discrete inf-sup condition for Q2-P1 time-depedent Stokes elements
	for CutFEM, see \cite{Hansbo14,MR3544656, MR3807820, MR1950614, MR2092744, MR2421892}. This will be published elsewhere in the context of the CutFEM shown in this work. We
	note however that  to establish energy stability
	we only need to control $(\jump{\bu^{n+1}}, \{q^{n+1}\}\bnhat)_{\Gamma_\hs^{n}} = (\jump{\bu^{n+1}}\cdot\bnhat, \{q^{n+1}\})_{\Gamma_\hs^{n}}$. Using similar arguments as in the sources above we have: \\
	\begin{enumerate}
	\item  If $\mu/h$ dominates $h/{\Delta t}$
	we control via 
	\begin{equation*}
	(\jump{\bu^{n+1}}\cdot\bnhat, \{q^{n+1}\})_{\Gamma_\hs^{n}} \leq 
	\frac{\gamma_1\mu}{h} \norm{\jump{\bu_h^{n+1}}}_{L^2(\Gamma_\hs^n)}^2
	+ \frac{h}{4\gamma_1 \mu} \norm{\{q^{n+1}\}}_{L^2(\Gamma_\hs^n)}^2,
	\end{equation*}
	\item If $h/{\Delta t}$ dominates  $\mu/h$ we do
	\begin{equation*}
	(\jump{\bu^{n+1}}\cdot\bnhat, \{q^{n+1}\})_{\Gamma_\hs^{n}} \leq 
	\frac{\gamma_2 h}{{\Delta t}} \norm{\jump{\bu_h^{n+1}\cdot\bnhat}}_{L^2(\Gamma_\hs^n)}^2
	+ \frac{\Delta t}{4\gamma_2 h} \norm{\{q^{n+1}\}}_{L^2(\Gamma_\hs^n)}^2.
	\end{equation*}
	
	\end{enumerate}
	Hence, let us define 
	\begin{equation}
	\gamma_p = \min\left\{\frac{1}{4\gamma_1\mu},\frac{\Delta t}{4\gamma_2h^2}\right\}\quad \text{ and } \quad  \gamma_{\bu} =  \gamma_1 \mu + \gamma_2 \frac{h^2}{\Delta t}.
	\label{gam1 and gam2 def}
	\end{equation}


\section{Numerical results}
\label{Sec: Numerical results}

Below we illustrate the theoretical findings and some
approximation results with numerical simulations.
The numerical test cases confirm the unconditionally energy stability
of the semi-implicit method and conditionally stability of the
explicit method. 

In each example we choose the computational domain to be the square $\Omega = (0,1)^2$ with  the fluid initially at rest.
Let the reference configuration for $\Gamma$ be the unit interval, i.e., $L=1$.
We subdivide the interval $[0,1]$ into $m+1$  equally-spaced points $s_j$ 
such that the $\bX_j^0 := \Gamma(s_j,0)$ satisfies $\max_j |\bX_{j+1}^0 - \bX_j^0| < h/2$.
Thus, the step size in $[0,1]$ is $\hs = 1/m$.
We choose the penalty parameters from \eqref{jump of u} and \eqref{jump of u square} to be $\gamma_1=\gamma_2 = 10$.  With our choice of uniform $\hs$ we further expect $\Gamma_\hs^n$ to approach a regular polygon with the sampled points equally spaced along the interface.

In each example, since the coupled problem can be reduced to
a second-order in time partial differential equation,
the immersed boundary should oscillate and due to
the viscosity it should converge to a circular steady state. Due to
the incompressibility of the fluid, the interior area enclosed by
the membrane should not change in time. Another goal of the numerical
tests is to show good approximation for such properties.

\subsection{Example 1: Spatial convergence}

The results in Table \ref{convergence table} illustrate the convergence of our method in the steady-state problem
\begin{align}
\begin{split} \label{static problem}
-\mu\nabla\cdot\eps(\bu_i) + \nabla p_i &= {\bf f} \qquad \text{ in } \Omega_i, \, i = 1, 2 ,\\
\nabla\cdot \bu_i &= 0 \qquad  \text{ in } \Omega_i, \, i = 1,2.
\end{split}
\end{align}
The boundary condition for $\bu_1$ on $\partial \Omega$ and jump conditions $\jump{(\mu \eps(\bu) - p)\bnhat}$, $\jump{\bu\cdot\bnhat}$, and $\jump{\bu}$ on $\Gamma$ are chosen to match the exact test solutions
\begin{align}
\begin{split} \label{static solution}
\bu_1 = \pmat{\sin(x)\cos(y)\\ -\cos(x)\sin(y)}&, \quad p_1 = \sin(2\pi x) \cos(2\pi y),\\
\bu_2 = \pmat{x e^{-xy}\\ -y e^{-xy}}&, \quad p_2 = x^2 y^2.
\end{split}
\end{align}

\begin{table}[t]
	\caption[Error approximations illustrating convergence in the static test problem.]{Error tables for the (a) velocity  and (b) pressure  solving \eqref{static problem} in Example 1.
	The numbers shown are the values of the difference between the approximate solution and the exact solution given by \eqref{static solution} in the specified norm with spatial grid size $h$.
	Each rate $k$ corresponds to the convergence rate $O(h^k)$.
		\label{convergence table}}
		\begin{subtable}{\textwidth}
			\caption{Error of the velocity.
				\label{velocity convergence table}}
			\begin{center}
				\scalebox{0.9}{
					\begin{tabular}{  l | c  c | c c | c c | c c} 
						$1/h$ & $L^2$ & $k$ & $H^1$ & $k$ & $L^\infty$ & $k$ & $W^{1,\infty}$ & $k$ \\ 
						\hline
						8 & 6.0751e-4 & &  1.2079e-2  & & 2.9443e-3 & &  1.5822e-1
						\\ 
						16 & 5.7992e-5 & 3.4 &  2.8194e-3 & 2.1 &  4.0194e-4 & 2.9 &  5.0981e-2 & 1.6
						\\ 
						32 & 4.0155e-6 & 3.9 &  4.7479e-4 & 2.6 &  4.6180e-5 & 3.1 &  1.3094e-2 & 2.0
						\\
						64 & 3.8898e-7 & 3.4 &  8.4839e-5 & 2.5 &  7.5565e-6 & 2.6 &  3.7782e-3 & 1.8
						\\ 
						128 & 3.0663e-8  & 3.7 & 1.5375e-5 & 2.5 &  9.9287e-7 & 3.0 &  1.0938e-3 & 1.8 
						
					\end{tabular}
					\vspace{-15pt}
				}
			\end{center}
			
		\end{subtable}
		
		\begin{subtable}{\textwidth}
			\caption{Error of the pressure.
				\label{pressure convergence table}}
			\begin{center}
				\vspace{12pt}
				\scalebox{0.9}{
					\begin{tabular}{  r | c  c | l c | c c | c l} 
						$1/h$ & $L^2$ & $k$ & $H^1$ & $k$ & $L^\infty$ & $k$ & $W^{1,\infty}$ & $k$ \\ 
						\hline
						8 & 3.7455e-2 &  &  1.3695 &  &  2.3560e-1 &  &   6.5007 &
						\\ 
						16 & 7.1874e-3 & 2.4 &   6.1616e-1  & 1.2 & 5.2798e-2 & 2.2 &  2.9639 & 1.1
						\\ 
						32 & 1.7328e-3 & 2.1 &  3.0661e-1 & 1.0 &  1.6484e-2 & 1.7 &  2.0968 & 0.5
						\\
						64 & 4.1940e-4 & 2.0 &  1.5074e-1 & 1.0 &  5.4817e-3 & 1.6 &  1.3588 & 0.6
						\\ 
						128 & 1.0151e-4 & 2.0 &  7.4491e-2 & 1.0 &  1.4711e-3 & 1.9 &  7.4067e-1 &  0.9
						
					\end{tabular}
					\vspace{-40pt}
				}
			\end{center}
			
		\end{subtable}

\end{table}

The exact solution in \eqref{static solution} exhibits nonzero jumps in the velocity and stress across the interface, independent of our choice of $\Gamma$.
 Table \ref{convergence table} was generated using a circular interface of radius $r=0.3$ centered at $(0.5,0.5)$.
The discrete interface $\Gamma_\hs$ was constructed by choosing the reference configuration to be the unit interval, i.e., $L=1$, with $\hs=1/400$.
The $H^1$ error observed in these tables is optimal since we are using Q2-P1 elements.
Note that the rates $k$ seen in Table \ref{convergence table} correspond to the convergence rate $O(h^k)$.
We also see superconvergence in the $L^2$ and $H^1$ norms of the velocity and near-optimal convergence in the other norms.

To compute the $L^2$ and $H^1$ error, we extend $\bu$ to $\Omega_{i,\hs}$ when necessary using \eqref{static solution} and compute the norms of the difference $\bu_i-\bu_{i,h}$ on each subdomain.
The $L^\infty$ and $W^{1,\infty}$ norms are computed using the difference $\bu_i-\bu_{i,h}$ at all the nodes where the degrees of freedom of $\bu_h$ is
imposed, and also include the points on $\Gamma_\hs$, including each $s_j$ and all points where $\Gamma_\hs$ intersects edges of elements in $\Tc_{i,h}$ by interpolating $\bu_h$.

\subsection{Example 2: Ellipse}

The second example is a common scenario found in related literature \cite{Boffi08,Leveque97}.
The interface $\Gamma$ will begin as an ellipse where the initial points chosen are sampled from
\begin{equation}
\bX^0(s) = \pmat{0.3\cos(2\pi s) + 0.5\\ 0.4\sin(2\pi s) + 0.5}, \quad s\in [0,1].
\label{ellipse}
\end{equation}
The discrete interface $\Gamma_\hs^0$ is approximated by mapping $m+1$ equally-spaced points from $[0,1]$. 
It is worth noting that the lengths of two adjacent segments on $\Gamma_\hs^0$ may be different.
Since $\hs$ is chosen to be constant across each reference segment throughout all simulations, in addition to bending, the result is also a ``tension" force, or a stretching in the direction tangent to $\Gamma_\hs^0$.
The effects of such a force will be emphasized in Example 4.


\begin{figure}
	\begin{center}
		\makebox[\textwidth][c]{\includegraphics[width=\textwidth]{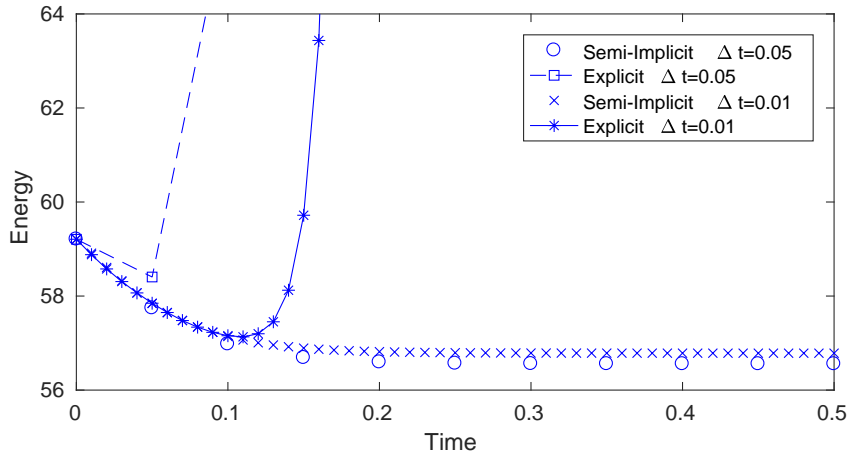}}
		\caption{Plot of the energy of the system in Example 2 comparing the explicit and semi-implicit methods with $\mu = 1$, $\kappa = 6$, $h=1/32$, and $\Delta t = 0.05$ ($\square$ for explicit, $\circ$ for semi-implicit) or $\Delta t = 0.01$ ($*$ for explicit, $\times$ for semi-implicit).
			The lines connecting data points are included when plotting the results of the explicit method to highlight the steep increase in energy.
		}
		\label{fig: ellipse implicit explicit comparison}
	\end{center}
\end{figure}

\begin{figure}	
	\begin{center}
		\includegraphics[width=.75\textwidth]{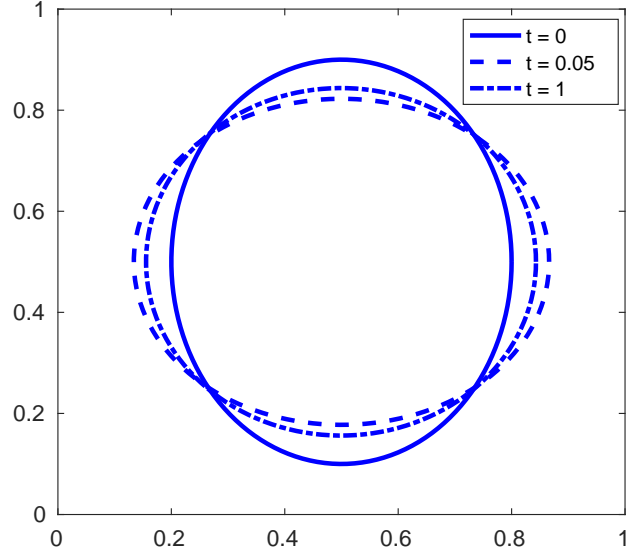} 
	\end{center}
	
	\caption{Plot of the position of the interface shown at $t=0$, $t=0.05$, and $t=1$.
		Simulation run with $\mu = 0.01$, $\kappa = 50$, $N=32$, $\Delta t = 0.01$, and the initial configuration given in Example 2.}
	\label{fig: ellipse three time steps}
\end{figure}

\begin{table}[]
	\caption{Normalized deviation of interior area at $t=0.5$ from initial interior area in Example 2.
		Results obtained using $\mu = 1$ and $\kappa = 10$ with $\Delta t$ and $h$ as shown.
	}
	\begin{center}
		\quad\qquad\qquad $\Delta t$
	\end{center}
	\centering
	\begin{tabular}{cr|cccc}
		& & 2.5e-3 & 1.25e-3 & 6.25e-4 & 3.125e-4\\
		\cline{2-6}
		& 16 & -4.3376e-04 & -2.1349e-04 & -1.0216e-04 & -4.7446e-05\\
		$1/h$ & 32 & -5.1587e-04 & -2.6936e-04 & -1.4444e-04 & -7.8798e-05\\
		& 64 & -5.0128e-04 & -2.5730e-04 & -1.3752e-04 & -7.6356e-05
	\end{tabular}
	
	\label{ellipse area table}
\end{table}

As seen in Figure \ref{fig: ellipse implicit explicit comparison}, the solution computed using the explicit method, with parameters chosen such that an
instability occurs, blows up very quickly as the energy fails to dissipate.
With a smaller time step, we see a more gradual increase in energy as the method does not fail so quickly.
The semi-implicit method exhibits the theoretical energy stability over the
explicit method and remains stable with each set of parameters tested.
Figure \ref{fig: ellipse three time steps} shows the position of the interface at three time steps capturing one intermediate step before steady state is achieved prior to $t=1$.

Table \ref{ellipse area table} shows the normalized deviation at time $t=0.5$ from the original interior area.
Due to the incompressibility of the fluid, the optimal result is a constant interior area as the interface moves.
Recall that $m$ is the number of points sampled from $\Gamma^0$ to form the polygon $\Gamma_\hs^0$.
As the mesh size $h$ decreases, we increase $m$.
In addition to improving the initial approximation of each subdomain, the conservation of the interior becomes more accurate as the mesh is refined.
We also see significant improvement in the conservation of interior area as $\Delta t$ is refined.
In Table \ref{ellipse area table} we see that the deviation from interior area is no larger than 0.05\% with the chosen parameters, and can be reduced to less than 0.008\% by refining $h$ and $\Delta t$.
It is worth noting that in this example a greater improvement is seen by reducing $\Delta t$ compared to reducing $h$.

We now turn to some observations of the temporal convergence of the semi-implicit method.
	In Table \ref{richardson table} the convergence of fluid velocity in the $L^2$ norm over the domain $\Omega$ is estimated using Richardson extrapolation.
	The value of the ratio shown in the table corresponds to
	\begin{equation}
	\label{Richardson ratio}
	r_{\Delta t} = \log_2\left(\frac{\norm{\bu_{\Delta t} - \bu_{\Delta t/2}}_{L^2(\Omega)}}{\norm{\bu_{\Delta t/2} - \bu_{\Delta t/4}}_{L^2(\Omega)}}\right),
	\end{equation}
	where $\bu_{\Delta t}$ is the approximation of $\bu$ at $t=0.1$ computed using time step $\Delta t$.
	The convergence of the method can be seen as $O({\Delta t}^{r})$. Since
	the method is high-order in $h$, the error is dominated by
	time discretization errors and Table \ref{richardson table} shows
	that the error of the fluid velocity in $L^2$ norm is asymptotically
	linear in $\Delta t$.

	Figure \ref{fig: ellipse traction} shows the point-wise linear convergence of the interior and exterior traction to $\Gamma_\hs$. 
	The traction is computed at the midpoint  of each segment of $\Gamma_\hs$, corresponding to $s_{j+\frac{1}{2}}$ in the reference configuration because the normal vector is not well-defined at each vertex of the polygon $\Gamma_\hs$. 
	In each plot the traction at $t=0.1$ computed using $\Delta t = 0.1/2^6$ is compared to the traction computed using $\Delta t = 0.1/2^3$, $\Delta t = 0.1/2^4$, and $\Delta t = 0.1/2^5$.
	Each line shows the sum of the absolute value of the difference in each component of the traction vector at each point $s_{j+\frac{1}{2}}$ in the reference configuration.
	Similarly, Figure \ref{fig: ellipse interface difference} shows the linear
	convergence in interface location.
	The interface location at $t=0.1$ computed using $\Delta t = 0.1/2^6$ is compared to the traction computed using $\Delta t = 0.1/2^3$, $\Delta t = 0.1/2^4$, and $\Delta t = 0.1/2^5$.
	The sum of the absolute value of the difference in each coordinate $\bX(s_j,0.1)$ is plotted.

	\begin{figure}
		
		\begin{center}
			\begin{subfigure}{\textwidth}
				\includegraphics[width=\textwidth]{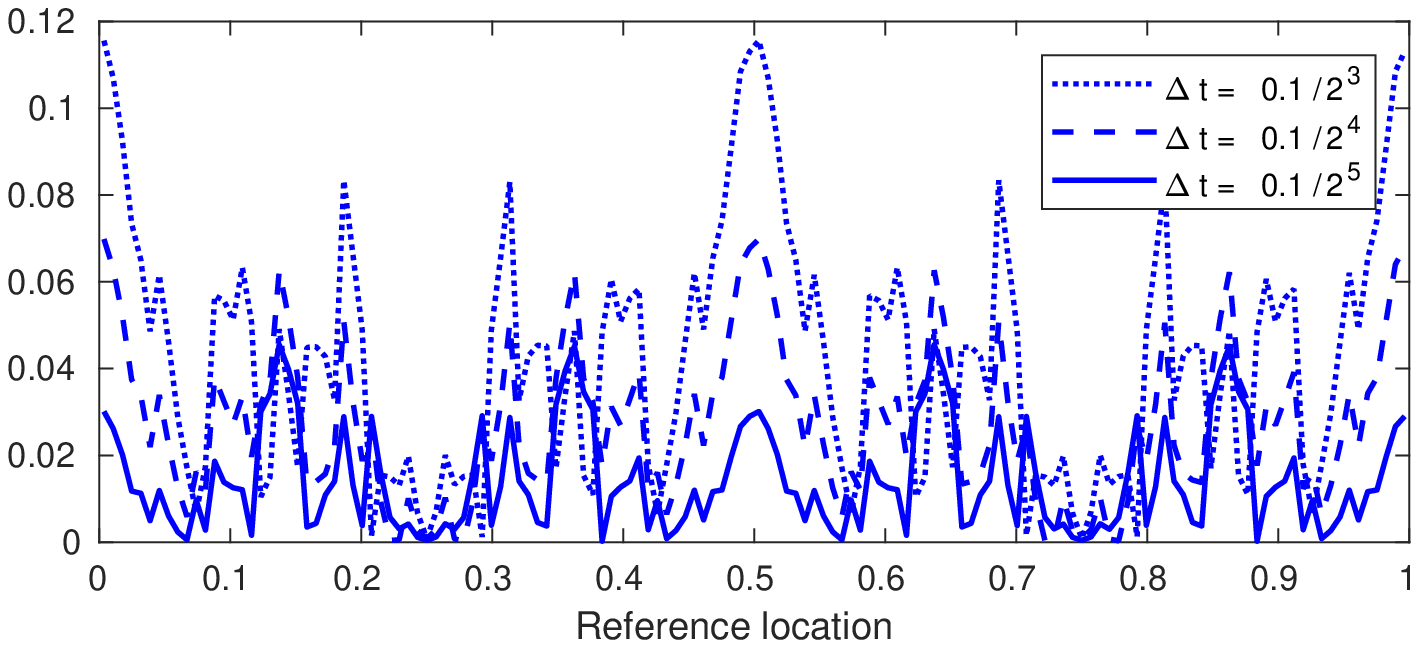}
				\caption{Interior traction.}
			\end{subfigure}
		\end{center}
		
		\begin{center}
			\begin{subfigure}{\textwidth}
				\includegraphics[width=\textwidth]{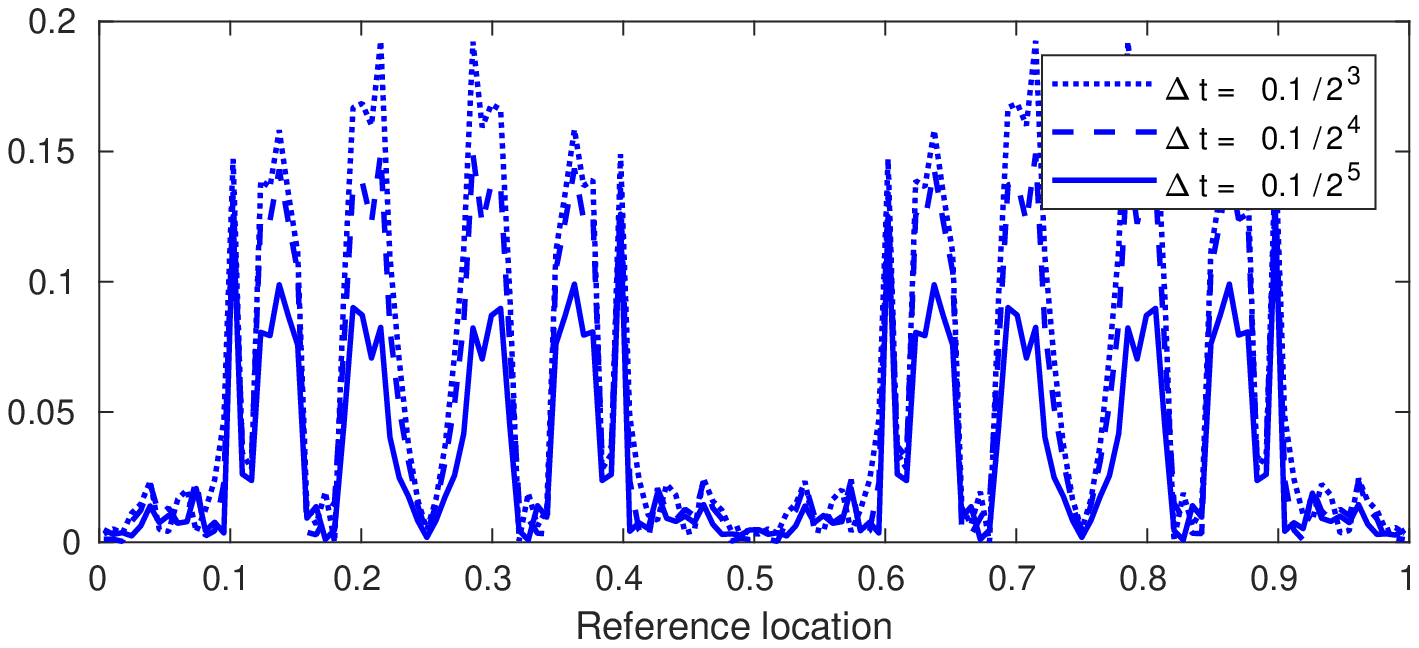}
				\caption{Exterior traction.}
			\end{subfigure}
		\end{center}
		
		\caption{The above figure shows the convergence of the traction at $t=0.1$ to the true solution computed using $\Delta t = 0.1/2^6$, $\mu=0.1$, and $\kappa=1$.
		The initial interface configuration is that described in Example 2.
		The quantity plotted is the sum of the absolute value of the difference at each midpoint on $\Gamma_\hs$.
		Point-wise convergence of the interior and exterior traction is observed.}
		\label{fig: ellipse traction}
	\end{figure}
	
	\begin{figure}	
		\begin{center}
			\includegraphics[width=\textwidth]{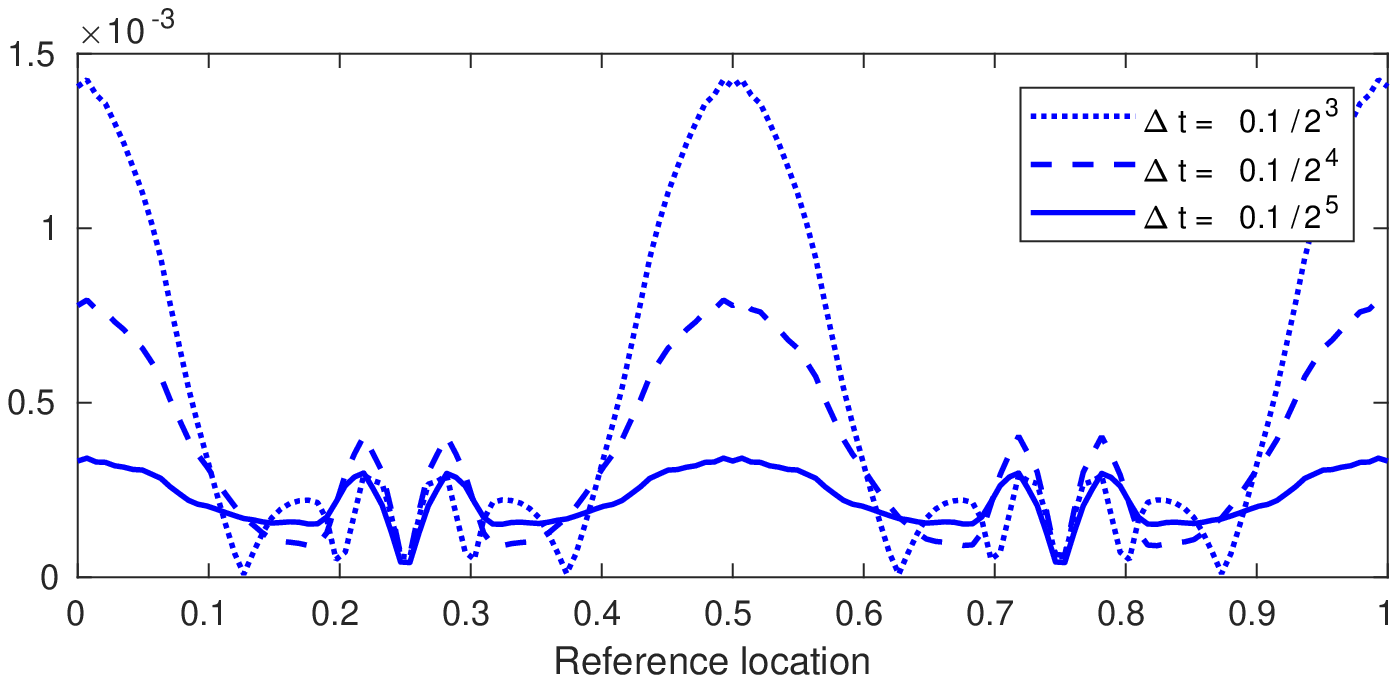} 
		\end{center}
		
		\caption{Plot of the difference between the interface location at $t=0.1$ computed using $\Delta t = 0.1/2^6$ and the time steps indicated in the plot with $\mu=0.1$ and $\kappa=1$.
		The initial interface configuration is that described in Example 2.
		The quantity plotted is the sum of the absolute value of the difference at each midpoint on $\Gamma_\hs$.}
		\label{fig: ellipse interface difference}
	\end{figure}

	\begin{table}[]
		\caption{Temporal convergence rates $r_{\Delta t}$ using Richardson extrapolation for Example 2 (ellipse) and Example 3 (heart) with $\mu=1$ and $\kappa=10$.
		The results in from this test indicate linear convergence of velocity $\bu$ in the $L^2$ norm when using the semi-implicit method.
		}
		\begin{center}
			\quad\qquad\quad $\Delta t$
		\end{center}
		\centering
		\vspace{5pt}
		\begin{tabular}{c|ccccc}
			 & 0.1& 0.1/2 & $0.1/2^2$ & $0.1/2^3$ & $0.1/2^4$\\
			\hline
			Ellipse &  1.5094 & 1.7405 & 1.9461 & 1.5505  &	-\\
			Heart & 3.7075 & 2.3262 & - & 4.7735 & 1.2742\\
		\end{tabular}
		
		\label{richardson table}
	\end{table}

\subsection{Example 3: Heart}

The third example is used to ensure that energy stability still holds regardless of the convexity of the interface and displacement of the centroid of the interior subdomain.
The original curve is constructed as the sum of two translated cardioids and is parameterized by $s\in [0,1]$ as follows:
\begin{equation*}
\bX^0(s) = \frac{1}{20}\pmat{\cos(2\pi s)\left(7(1-\sin(2\pi s)) + 3(1-\cos(2\pi s))\right) + 24\\ \sin(2\pi s)\left(3(1-\sin(2\pi s)) + 7(1-\cos(2\pi s))\right) + 24 }.
\end{equation*}
The energy plots in Figure \ref{fig: heart implicit explicit comparison} show that the energy in the explicit method becomes unstable slower than in the previous example.
However, the semi-implicit method remains stable.
Figure \ref{fig: heart three time steps} shows the position of the interface as it deforms and moves toward the top-right corner of $\Omega$, approaching a circular steady state.
In this figure we observe a quick deformation to a convex interior at $t=0.05$ and a translation of this region in the subsequent time steps.
Table \ref{heart area table} shows the normalized deviation from the original interior area at time $t=0.5$.
Contrary to the previous example, we see more improvement from reduction of $\Delta t$ than from refinement of $h$.
Here, the area loss is reduced almost linearly with the reduction in $\Delta t$ and very little corresponding to a smaller mesh size $h$.

\begin{figure}
	\begin{center}
		\makebox[\textwidth][c]{\includegraphics[width=\textwidth]{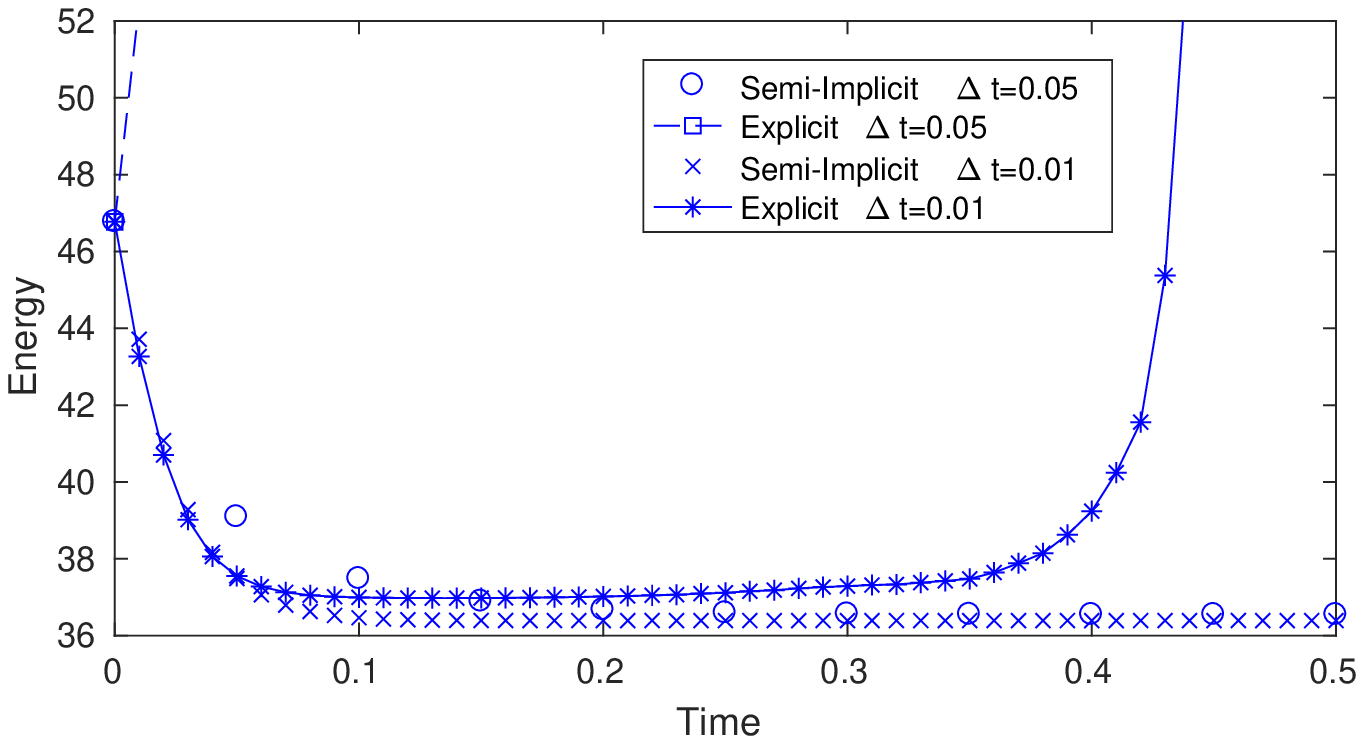}}
		\caption{ Plot of the energy of the system in Example 3,
			comparing the explicit and semi-implicit methods with $\mu = 1$, $\kappa = 6$, $N=32$, and $\Delta t = 0.05$ ($\square$ for explicit, $\circ$ for semi-implicit) or $\Delta t = 0.01$ ($*$ for explicit, $\times$ for semi-implicit).
			The lines connecting data points are included when plotting the results of the explicit method to highlight the steep increase in energy.
		}
		\label{fig: heart implicit explicit comparison}
	\end{center}
	
\end{figure}

\begin{figure}
	\centering
	\includegraphics[width=.75\textwidth]{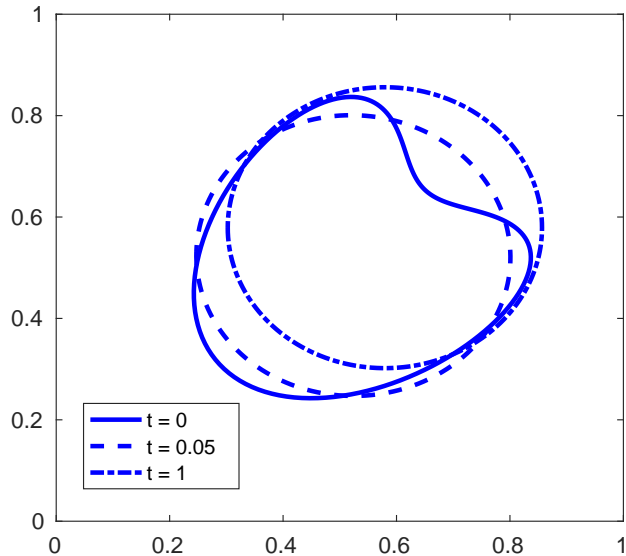}
	
	\caption{Plot of the position of the interface shown at $t=0$, $t=0.05$, and $t=1$.
		Simulation run with $\mu = 0.01$, $\kappa = 50$, $N=32$, $\Delta t = 0.01$, and initial configuration from Example 3.
	}
	\label{fig: heart three time steps}
\end{figure}

\begin{table}[]
	\caption{Normalized deviation of interior area at $t=0.5$ from initial interior area in Example 3.
		Results obtained using $\mu = 1$ and $\kappa = 10$ with $\Delta t$ and $h$ as shown.
	}
	\begin{center}
		\quad\qquad\qquad $\Delta t$
	\end{center}
	\centering
	\begin{tabular}{cr|cccc}
		& & 2.5e-3 & 1.25e-3 & 6.25e-4 & 3.125e-4\\
		\cline{2-6}
		& 16 & 4.3740e-03 &  2.3318e-03 &  1.2818e-03 &  7.7174e-04\\
		$1/h$ & 32 & 5.1529e-03  & 2.8014e-03  & 1.4961e-03  & 8.0600e-04\\
		& 64 & 5.0489e-03 &  2.7171e-03 &  1.4206e-03 &  7.3541e-04
	\end{tabular}
	
	\label{heart area table}
\end{table}

	In Table \ref{richardson table} the temporal convergence of the velocity in the $L^2$ norm is estimated using Richardson extrapolation alongside the rate of convergence estimates for the previous example.
	One can see that the rate of convergence is super-linear both examples.
	
	Figure \ref{fig: heart traction} shows the point-wise convergence of the interior and exterior traction.
	The traction is evaluated at the midpoint of each segment of $\Gamma_\hs$ and the quantity plotted is analogous to that described in the previous example.
	Figure \ref{fig: ellipse interface difference} shows the convergence of the interface at $t=0.1$ using refined values of $\Delta t$ compared to the interface location at $t=0.1$ using $\Delta t = 0.1/2^6$.
	
	\begin{figure}	
		\begin{center}
			\begin{subfigure}{\textwidth}
				\includegraphics[width=\textwidth]{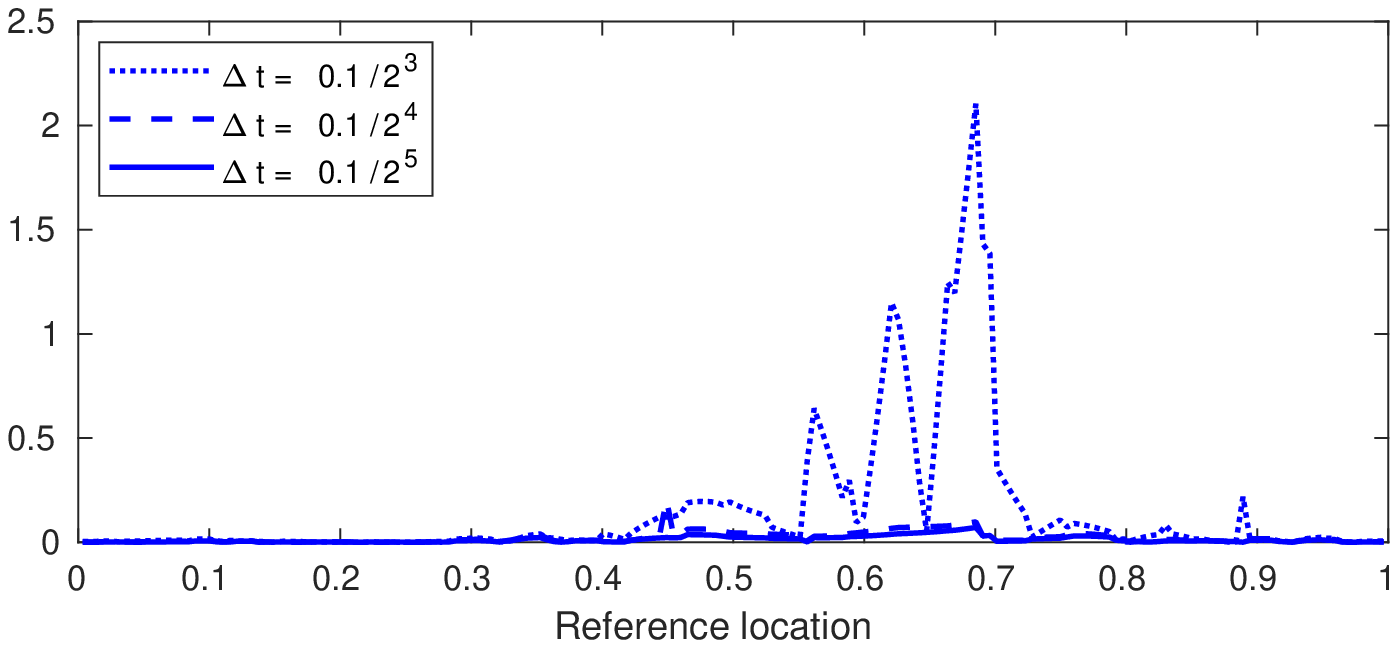} 
				\caption{Interior traction.}
			\end{subfigure}
			
		\end{center}
		
		\begin{center}
			\begin{subfigure}{\textwidth}
				\includegraphics[width=\textwidth]{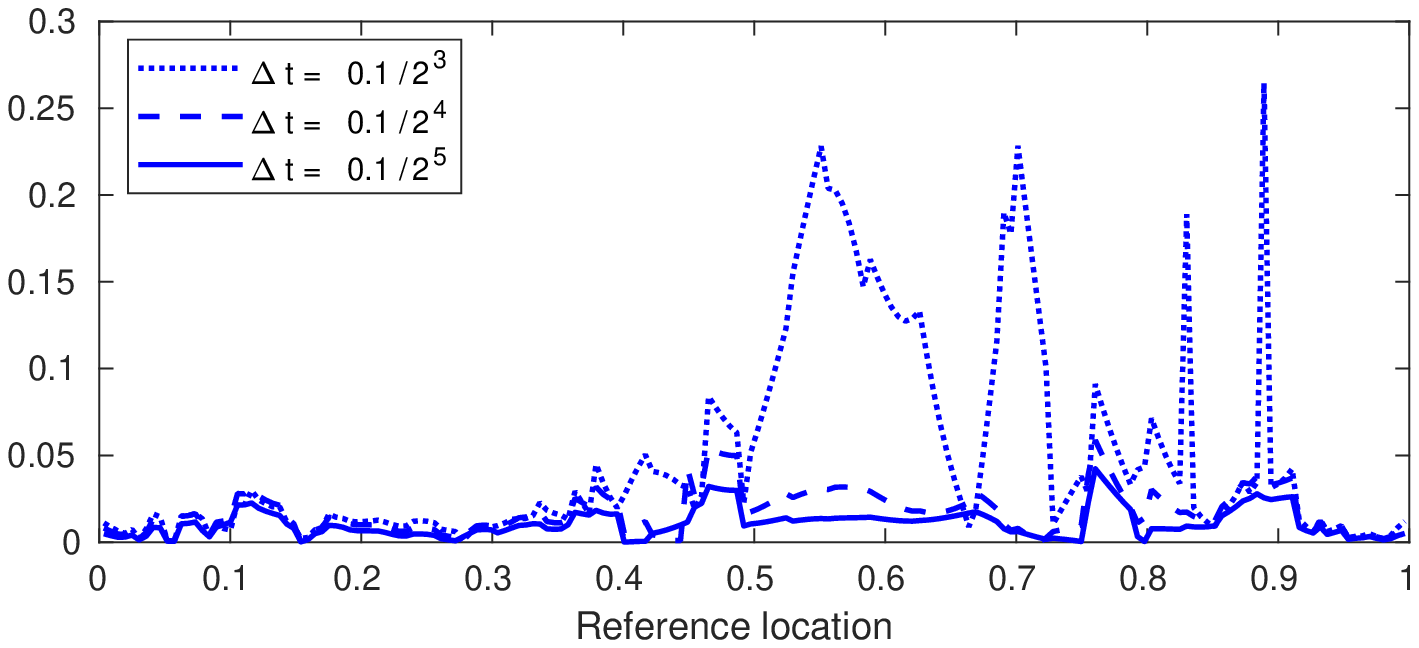}
				\caption{Exterior traction.}
			\end{subfigure}
			
		\end{center}
		
		\caption{The above figure shows the convergence of the traction at $t=0.1$ to the true solution computed using $\Delta t = 0.1/2^6$, $\mu=10$, and $\kappa=1$.
			The initial interface configuration is that described in Example 3.
			The quantity plotted is the sum of the absolute value of the difference at each point midpoint on $\Gamma_\hs$.
			Point-wise convergence of the interior and exterior traction is observed.}
		\label{fig: heart traction}
	\end{figure}
	
	\begin{figure}	
		\begin{center}
			\includegraphics[width=\textwidth]{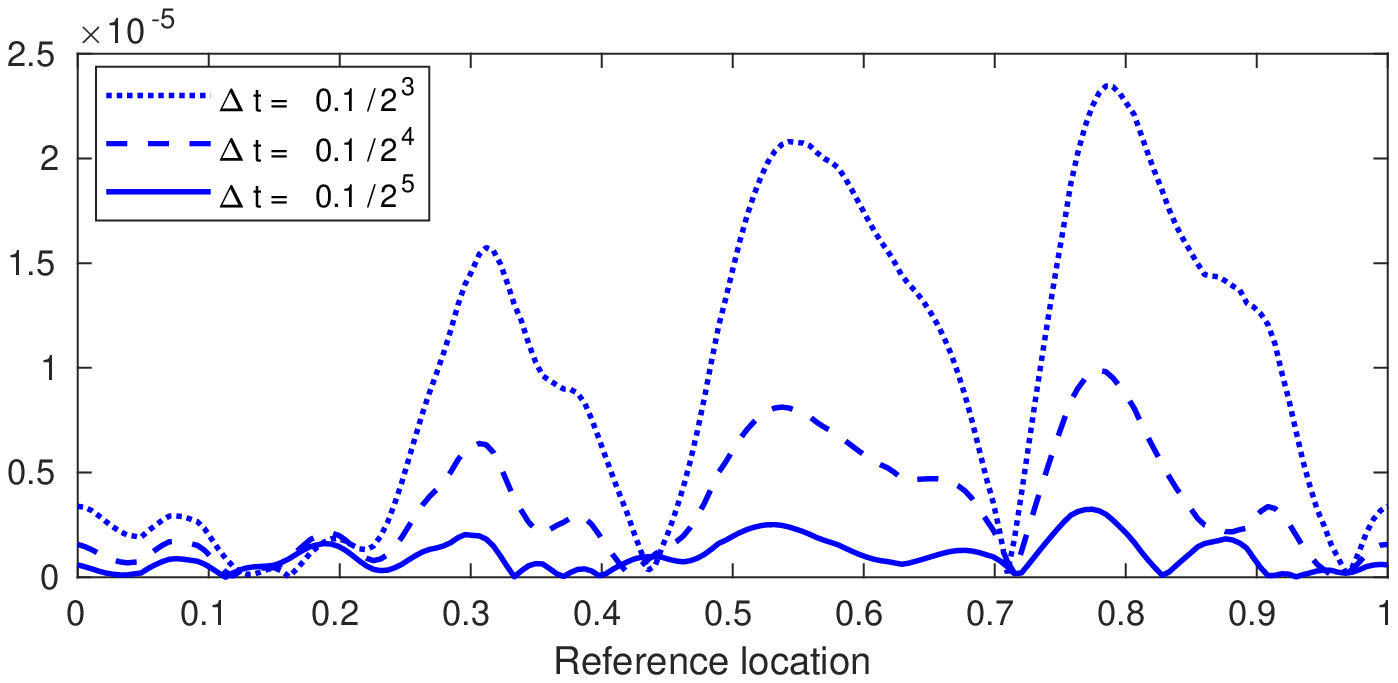} 
		\end{center}
		
		\caption{Plot of the difference between the interface location at $t=0.1$ computed using $\Delta t = 0.1/2^6$ and the time steps indicated in the plot with $\mu=10$ and $\kappa=1$.
		The initial interface configuration is that described in Example 3.
		The quantity plotted is the sum of the absolute value of the difference at each point midpoint on $\Gamma_\hs$.}
		\label{fig: heart interface difference}
	\end{figure}

\subsection{Example 4: Stretched circle}

The fourth example is chosen to emphasize the effects of a nonuniform tension around the perimeter of $\Gamma_\hs^0$.
The initial configuration is a circle of radius $\frac{1}{4}$ centered at $\left(\frac{1}{2},\frac{1}{2}\right)$ with its parameterization given by
\begin{equation}
\bX^0(s) = \frac{1}{4}\pmat{\cos(2\pi s) + 2\\ \sin(2\pi s) + 2}, \quad s\in [0,1].
\label{circle}
\end{equation}
Previously we chose equally-spaced points from the interval $[0,L]$ when the tension of each segment was arbitrary.
Since $\Gamma_\hs^0$ is a circle,  we must sample at nonuniform $s_j$ to prescribe a nonuniform tension on the edges of the polygon $\Gamma_\hs^0$.
The parameter values $\tilde s_j$ used to compute $\bX_j^0 = \bX^0(\tilde s_j)$ are the $m+1$ evenly spaced points $s_j\in [0,1]$ mapped by the cubic function
$$\tilde s_j = \frac{1}{5}\left(16s_j^3 - 24s_j^2 + 13s_j\right).$$

The result of this simulation, computed using the semi-implicit method, will be a leftward moving circle as a force tangent to the interface is applied to the fluid.
Equilibrium is obtained when the points on the circle are equally spaced and the total force applied to the fluid is zero.
Figure \ref{fig: stretched circle time steps} shows the position of the points on the interface at three time steps.
These plots highlight the leftward motion and the even distribution of the points on $\Gamma_\hs^n$ near steady state, at $t=1$.
In this example, the semi-implicit method is unconditionally stable.

\begin{figure}[]
	\centering
	\makebox[\textwidth][c]{\includegraphics[width=1.3\textwidth]{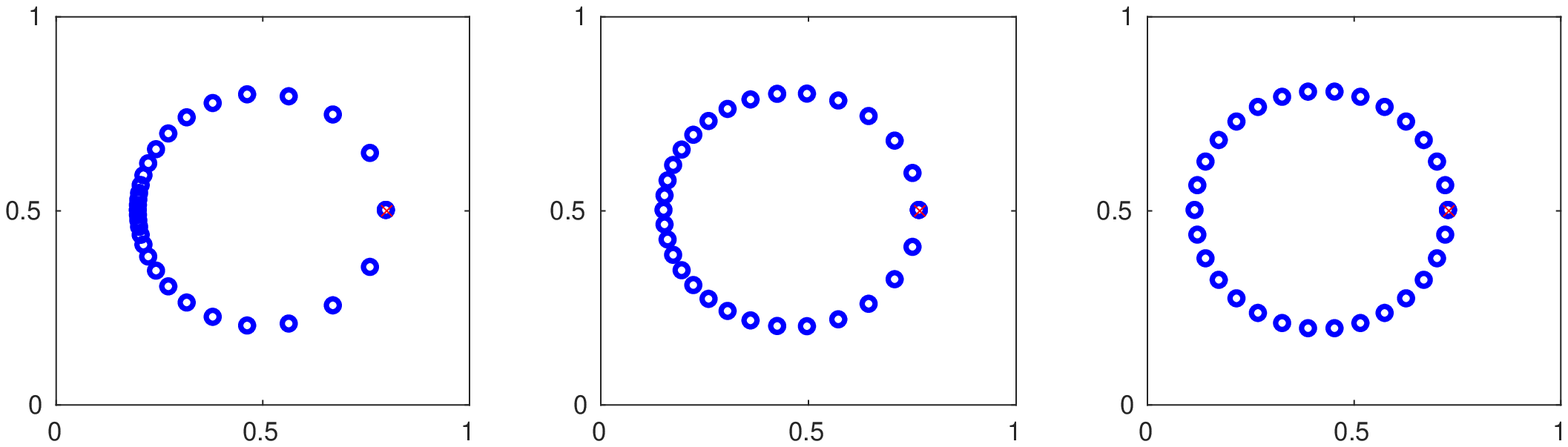}}
	\caption{Position of the interface at $t=0$, $t=0.05$, and $t=1$.
		We let $\mu = 1$, $\kappa = 10$, $\Delta t = 0.01$, and $N=32$ and use the initial configuration from Example 4.
		An encircled red $\times$ is used to denote $\bX(s_0,t)$ for reference.
		We display every 6th point on the interface to reduce clutter.
		\label{fig: stretched circle time steps}
	}
\end{figure}

\section{Conclusions}
\label{Sec: Conclusions}

In this work we presented a new finite element method for solving unsteady
Stokes equations with an immersed membrane that moves with the velocity of the fluid, not known a priori.
We successfully combined the classical immersed boundary method with Nitsche's formulation and CutFEM to solve this problem in two dimensions.
The proposed method maintains the use of the Dirac delta function to pass the force applied by the immersed structure in the Lagrangian frame to the fluid in the Eulerian frame.
more accurately incorporating the force applied by the interface on the fluid with conditional energy stability.
We developed a semi-implicit discretization and added the necessary consistent penalty terms to maintain energy stability.
The stability of our method is proved and verified in each example of our numerical results.
This semi-implicit method was tested alongside the explicit CutFEM method, which is the algorithm directly analogous to the original finite element immersed boundary method \cite{Boffi03}.
Using CutFEM we improved the error in computing the velocity and pressure of the fluid near the interface;
however, we continued the use of a polygonal approximation to the interface for its simplicity in computing the location of $\Gamma^{n+1}_\hs$.

The numerical results demonstrate that if the polygonal approximation to the interface is refined as the mesh is refined, we obtain optimal spatial convergence in Example 1, as shown in Table \ref{convergence table}.
We also observe in Examples 2-4 the theoretical unconditional energy stability proved in this work.
The conservation of area in each subdomain is desired and obtained for sufficiently small values of $h$ and $\Delta t$.
The trends observed in Table \ref{ellipse area table} and Table \ref{heart area table} are seen for sufficiently small values of $\Delta t$.
For larger values of $\Delta t$ we see an improvement in area conservation as $\Delta t$ is refined, however we may not observe such trends as $h$ is reduced.




\bibliographystyle{siam}
\bibliography{bib}

\end{document}